\theoremstyle{definition}
\newtheorem{examp}{Example}
\newtheorem{remark}{Remark}
\theoremstyle{theorem}
\newtheorem{thm}{Theorem}
\newtheorem{lem}{Lemma}
\newtheorem{coro}{Corollary}
\DeclareMathOperator{\CC}{\mathbb C}
\DeclareMathOperator{\RR}{\mathbb R}
\DeclareMathOperator{\ZZ}{\mathbb Z}
\DeclareMathOperator{\NN}{\mathbb N}
\DeclareMathOperator{\TT}{\mathbb T}
\DeclareMathOperator{\DD}{\mathbb D}
\DeclareMathOperator{\supp}{supp}
\DeclareMathOperator{\capa}{cap}
\DeclareMathOperator{\men}{men}
\newcommand{\too}{\longrightarrow}
\begin{document}

\title{The Hessenberg matrix and the Riemann mapping function}
\author{C.~Escribano}\email{cescribano@fi.upm.es}
\author{A.~Giraldo}\email{agiraldo@fi.upm.es}
\author{M.~A.~Sastre}\email{masastre@fi.upm.es}
\author{E.~Torrano}\email{emilio@fi.upm.es}
\address{Departamento de Matem\'{a}tica Aplicada, Facultad de Inform\'{a}tica, Campus de Montegancedo,
Universidad Polit\'{e}cnica de Madrid, 28660 Boadilla del Monte, Spain}

\begin{abstract}

We consider a Jordan arc $\Gamma $ in the complex plane $\CC $ and a regular measure $\mu $ whose support is  $\Gamma $. We denote by $D$ the upper Hessenberg matrix of the multiplication by $z$ operator with respect to the orthonormal polynomial basis associated with $\mu $. We show in this work
that,  if the  Hessenberg matrix $D$ is uniformly asymptotically
Toeplitz, then the symbol of the limit operator is the restriction to the unit circle of the Riemann mapping function $\phi(z)$ which maps conformally the exterior of the unit disk onto the exterior of the support of the measure $\mu $.

We use this result to show how to approximate the Riemann mapping function for the support of $\mu $ from the entries of the Hessenberg matrix $D$.

\keywords{Orthogonal polynomials \sep regular measures \sep Hessenberg matrix \sep Riemann mapping function}
\end{abstract}

\maketitle

\section{Introduction}

In this paper, we consider regular measures $\mu $ defined on subsets of the complex plane that are Jordan arcs, or finite union of Jordan arcs such that its complement is simply connected, and we show how the entries of the Hessenberg matrix $D$ associated with $\mu $ determines the Riemann mapping function that takes the complement of the closed unit disk $\overline{\DD }$ to the complement of the support of $\mu $.

The Riemann mapping theorem says, in its most common statement (see, for example, \cite{ahlfors}), that given a simply connected domain $\Omega  \subsetneq \CC $ and given $z_0 \in \Omega$, there is a unique analytic map  $\phi:  \Omega  \longrightarrow \DD $ ($\DD $ the open unit disk), normalized by the conditions $\phi (z_0)=0 $ and $\phi '(z_0 )>0$, such that $\phi $ defines a one-to-one mapping of $\Omega $ onto $\DD $.
However, we will use an equivalent formulation for domains containing $\infty $ which can be found, for example, in \cite{szego, pommerenke, jakimovski}. In this case, the Riemann mapping theorem states that, for every $\Gamma \subset \CC $ compact that is not a point, such that $\CC_{\infty}  \setminus \Gamma$ is simply connected, there is a unique conformal mapping  $\phi: \mathbb{C}_{\infty} \setminus \overline{\DD }\rightarrow \CC_{\infty}  \setminus \Gamma$ such that $\phi (\infty )=\infty $ and
$\phi ' (\infty )>0$, where $\phi ' (\infty )=\capa (\Gamma) $, the capacity of $\Gamma $.
Moreover, if $\Gamma $ is a simple Jordan curve,  $\phi (z)$ is continuous in the unit circle $\TT$.

There exists a well-known  link between the Riemann mapping function and the Green function, which has been described in the literature on potential theory (see, for example, \cite{finkelshtein}). If we denote by $\Phi (z)$ the inverse of $\phi (z)$ as defined in the previous paragraph, then the Green function for a compact set $K$ with $\capa (K)>0$, $g_K (z,\infty )$,
can be obtained from the Riemann conformal mapping $\Phi (z)$ which takes $\CC _{\infty } \setminus P_c (K)$ onto the exterior of $\overline{\DD } $, where $P_c (K)$ is the polynomial convex hull of $K$. Moreover, if $\Phi (z)=\displaystyle{\sum _{k=-1}^{\infty } c_{-k} z^{-k } } $, $c_1 >0 $, in a neighborhood of $\infty $, then
$$
g_{K} (z,\infty )=
\left\{ \begin{array}{ll}
\log |\Phi (z)| & \mbox{if  }\; z\in \CC _{\infty } \setminus P_c ( K) \\
0 & \text{otherwise.}
\end{array} \right.
$$
In particular, $c_1 =\dfrac{1}{\capa (K)} $.

An infinite matrix $T=(a_{i,j})_{i,j=1}^{\infty}$ is a Toeplitz matrix if each descending diagonal from left to right is constant, i.e,
there exists $(a_i)_{i\in \ZZ }$ such that
$$T= \left (
\begin{array}{ccccc}
a_{0}& a_{-1} & a_{-2} & \ldots \\
a_{1} & a_{0} & a_{-1} & \ldots \\
a_{2} & a_{1} & a_{0} & \ldots \\
a_{3} & a_{2} & a_{1} & \ldots \\
\vdots & \vdots & \vdots & \ddots
\end{array}
\right ).$$
Given a Toeplitz matrix $T$, the Laurent series whose coefficients are the entries $a_i $ defines a function known as the symbol of $T$.

J.Barr\'{\i}a and P.R.Halmos
\cite{hal-barria} called an operator $A$ on $\ell ^2$ asymptotically Toeplitz if
the sequence of operators $\{S^{*n}AS^n\}$ converges strongly  to a bounded operator on $\ell ^2$,
where $S$ is the forward shift and $S^*$ its adjoint.
  Later, A.Feintuch
  \cite{feintuch} established the definitions of weak and uniform operator convergence.
  Hence there are actually three different kinds
of asymptotic toeplitzness. In this work we deal with the uniform one: a bounded operator $A$  on  $\ell^2$  is uniformly asymptotically Toeplitz provided there is
a bounded operator $T$ on $\ell ^2$ such that
   $$ \lim_{n\to \infty } \| S^{*n} A S^n-T\| =0 .$$
By Theorem 2.4 in \cite{feintuch}, uniformly asymptotically Toeplitz operators are characterized as the compact perturbations of Toeplitz operators.

In the real case, the Hessenberg matrix agrees with the tridia\-gonal Jacobi matrix. Rakhmanov's theorem \cite{rakh77,rakh83} states that, if the support of a Borel measure is $[-1,1]$ with $\mu '> 0$ almost everywhere in $[-1,1]$, and
$$J= \left (
\begin{array}{ccccc}
b_{0}& a_{1} & 0 & \ldots \\
a_{1} & b_{1} & a_{2} & \ldots \\
0 & a_{2} & b_{2} & \ldots \\
0 & 0 & a_{3} & \ldots \\
\vdots & \vdots & \vdots & \ddots
\end{array}
\right ) $$
is the Jacobi matrix associated with $\mu $, then $a_n \rightarrow \dfrac{1}{2} $ and $b_n \rightarrow 0$. We observe that this matrix operator  is a compact perturbation of a Toeplitz
operator, so that $J$ is uniformly asymptotic Toeplitz.

Note that in this case, the Riemann mapping function for the interval $[-1,1]$ is
 $$\phi(z)=\frac{1}{2}\left(z+\frac{1}{z}\right)=\frac{1}{2}z+0+\frac{1}{2} \frac{1}{z}$$

Moreover, under the conditions of Rakhmanov's theorem, if $\supp (\mu )=[a,b]$, then the above limits are $a_{n} \to \dfrac{b-a}{4} $ and $b_{n} \to \dfrac{a+b}{2}$. In this case, the Riemann mapping function is
 $$\phi(z)=\frac{b-a}{4}z+ \frac{a+b}{2}+\frac{b-a}{4} \frac{1}{z}.$$

Conversely, P.Nevai established in \cite{nev79} that, if $a_{n} \to a>0$ and $b_{n} \to b$, then the support is $[-2a+b,b+2a]\cup e$ (where $e$ is at most a denumerable set of isolated points). Moreover, Nevai proved the equivalence between the existence of those limits and the ratio asymptotics
of orthonormal polynomials.

Generalizations of Rakhmanov's theorem to orthogonal polynomials, and to orthogonal matrix polynomials on the unit circle, have been given in \cite{mate85} and \cite{vanasche}. The case of orthogonal polynomials on an arc of circle has been studied in \cite{bello}.

As a final introductory motivating fact, we consider the Hessenberg matrix $D$ associated with a measure $\mu $ on $\TT $,
$$ D= \left (
\begin{array}{ccccc}
d_{1,1}& d_{1,2} & d_{1,3} & \ldots \\
d_{2,1} & d_{2,2} & d_{2,3} & \ldots \\
0 & d_{3,2} & d_{3,3} & \ldots \\
0 & 0 & d_{4,3} & \ldots \\
\vdots & \vdots & \vdots & \ddots
\end{array}
\right ) .$$
Then, if $\mu $ is of Szeg\H{o} class \cite{geronimus}, $ \lim _{n\to \infty }d_{n+1,n} = 1$ and $\lim _{n\to \infty }d_{n-k,n} = 0$, for all $k=0,1,2,\dots $ On the other hand, $\phi (z)= z $.

In  this paper we prove a theorem relating the limits of the elements of the diagonals of the Hessenberg matrix of a measure $\mu $, with the coefficients of the Laurent series expansion of the Riemann mapping function for the support of $\mu $. Specifically, we  show that, if $\mu $ is a regular measure in the complex plane $\CC $, whose support is  a Jordan arc or a finite union of Jordan arcs such that its complement is simply connected, and  if the  Hessenberg matrix operator $D$ is uniformly asymptotically
Toeplitz, then the symbol (whenever it is continuous) of the limit operator is the restriction to the unit circle of the the Riemann mapping function $\phi(z)$ which maps conformally the exterior of the unit disk onto the exterior of the support of the measure $\mu $. This connection is certainly a ``natural one'' for Faber polynomials \cite{gaier} and leads to a method for approximating the conformal map.

As a consequence of this theorem, we give a method to approximate the Riemann mapping function from the entries of the Hessenberg matrix $D$ even when the Hessenberg matrix $D$ can not be obtained as a closed form, and it is not possible to compute the limits of the elements of its diagonals.
 Also,  it is still possible to compute approximations of the support of the measure $\mu $, computing the image of the unit circle under suitable approximations of the Riemann mapping function.

There exist some previous results relating the properties of $D$ and the support of $\mu $. For example, if the Hessenberg matrix $D$ defines a subnormal operator \cite{halmos2} on $\ell^2$, then the closure of the convex hull of its numerical range agrees with the convex hull of its spectrum. On the other hand, the spectrum of the matrix $D$ contains the spectrum of its minimal normal extension $N=\men(D)$ which is precisely the support of the measure \cite{conway2}.

The organization of the paper is as follows: In Section $2$ we prove the main theorem.
 In  Section $3$ we show that the Riemann mapping function for the support of $\mu $ can be approximated from the entries of the Hessenberg matrix $D$. The last Section is devoted to several heuristic examples to illustrate the approximation results given in previous section.

For general information on the theory of orthogonal polynomials, we recommend the books \cite{chihara, szego} by T. S. Chihara and G. Szeg\H{o}, respectively, and the survey \cite{golinskii-totik} by L. Golinskii and V. Totik.


\section{The Diagonals Theorem}

Let $\mu$  be a Borel probability measure in the complex plane, with support $\supp(\mu)$ containing  infinitely many points.
Let $\mathcal{P} $ be the space of polynomials. The associated inner product is given by the expression
\[\langle
Q(z), R(z) \rangle _{\mu }= \int _{\supp (\mu )} Q(z) \overline{R(z)} d\mu(z),\]
for $Q,R\in \mathcal{P} $.
Then there exists a unique orthonormal polynomials sequence (ONPS)
$\{P_{n}(z)\}_{n=0}^{\infty}$ associated to the measure $\mu $ \cite{chihara,
fre61, szego}.

In the space $\mathcal{P} ^2 (\mu )$, closure of the polynomials space $\mathcal{P} $ in $L^2_{\mu } (\Omega )$, we consider the multiplication by $z$ operator. Let $D=(d_{ij})_{i,j=1}^{\infty}$ be the infinite upper Hessenberg  matrix of this operator in the basis of ONPS $\{ P_n(z) \}_{n=0}^{\infty } $, hence
\begin{equation} \label{rl0}
z P_{n} (z) = \sum_{k=0}^{n+1} d_{k+1,n+1} P_{k} (z), \quad n \geq 0,
\end{equation}
with
$P_{0}(z)=1$.

It is a well-known fact that the monic polynomials are the characteristic polynomials of the finite sections of $D$.

To state our main result, we will require the measure $\mu $ to be regular with support a finite union of Jordan arcs such that its complement is simply connected, and we will also need to consider an auxiliary Toeplitz matrix. We next recall the definitions of all these notions.

A Jordan arc in $\CC $ is any subset of $\CC $ homeomorphic to the closed interval $[0,1]$ on the real line.

A measure $\mu$ is regular if $\displaystyle{\lim_{n \to \infty} \frac{1}{\sqrt[n]{\gamma_{n}}} = \capa(\supp(\mu))}$, the capacity of the support of $\mu $, where the $\gamma _n $ are the leading coefficients of the orthonormal polynomials, i.e., $P_{n}(z)=\gamma_{n} z^{n}+ \cdots $.

We are now in a position to state and prove the main result of the paper.

\begin{thm}[The diagonals theorem]\label{diagonals th}
Let  $D=(d_{ij})_{i,j=1}^{\infty}$ be the Hessenberg matrix associated with a measure $\mu $ with compact support on the complex plane.
 Assume that:
\begin{enumerate}
\item The measure $\mu$ is regular with support $\supp(\mu) $ a Jordan arc or a finite union of Jordan arcs $\Gamma$ such that $\CC_\infty \setminus \Gamma $ is a simply connected set of the Riemann sphere $\CC_{\infty} $.
\item The Hessenberg matrix operator $D$ is uniformly asymptotically Toeplitz and the corresponding  limit matrix $T$   has a continuous symbol.
\end{enumerate}
Then, the symbol of $T$ is the restriction to the unit circle of the the Riemann mapping function  $\phi:\CC_{\infty} \setminus \overline{\DD }\rightarrow \CC_{\infty}  \setminus \Gamma$ which maps conformally the exterior of the unit disk onto the exterior of the support of the measure $\mu $.

\end{thm}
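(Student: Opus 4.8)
\section*{Proof proposal}

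The plan is to identify the symbol of $T$ directly with the boundary values of $\phi$. By the Hessenberg structure, $T$ has a single nonzero diagonal above none below the first subdiagonal, so its symbol must have the form $\sigma(z)=a_{1}z+a_{0}+\sum_{j\ge 1}a_{-j}z^{-j}$, where $a_{k-n}=\lim_{n}d_{k+1,n+1}$ is the limit along the corresponding diagonal of $D$. Since hypothesis~(2) makes $\sigma$ continuous on $\TT$ with Fourier spectrum bounded above by $1$, the series $\sigma(w)-a_{1}w$ is the boundary trace of a function analytic in $\{|w|>1\}$; thus $\sigma$ extends to $\tilde\sigma$, analytic on $\CC_{\infty}\setminus\overline{\DD}$ with a simple pole at $\infty$. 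The whole problem reduces to proving $\tilde\sigma=\phi$, and to do so I would pin down the normalization at $\infty$, then force univalence by operator‑theoretic input.

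First I would compute the leading coefficient. Comparing the coefficients of $z^{n+1}$ on both sides of the recurrence \eqref{rl0} gives the exact identity $d_{n+2,n+1}=\gamma_{n}/\gamma_{n+1}$ for the subdiagonal entries. Uniform asymptotic toeplitzness forces this ratio to converge to $a_{1}$; since convergence of a ratio forces the same limit for the $n$‑th roots, regularity ($\gamma_{n}^{1/n}\to 1/\capa(\Gamma)$) yields $a_{1}=\capa(\Gamma)=\phi'(\infty)>0$. Hence $\tilde\sigma$ and $\phi$ share the same positive derivative and the same value at $\infty$.

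Next I would bring in the operator theory. By Theorem~2.4 of \cite{feintuch}, $D=T_{\sigma}+K$ with $K$ compact. The operator $D$ is multiplication by $z$ on $\mathcal{P}^{2}(\mu)$, hence subnormal, and its minimal normal extension $\men(D)$ has spectrum $\supp(\mu)=\Gamma$; since $\CC_{\infty}\setminus\Gamma$ is connected there are no bounded complementary components, so $\operatorname{spec}(D)=\Gamma$, and because the continuum $\Gamma$ has no isolated points the whole spectrum is essential and $D-\lambda$ has Fredholm index $0$ for every $\lambda\notin\Gamma$. Essential spectrum and Fredholm index are invariant under the compact perturbation $K$, so for the continuous‑symbol Toeplitz operator $T_{\sigma}$ the classical formulas give $\sigma(\TT)=\Gamma$ and $\operatorname{wind}(\sigma,\lambda)=-\operatorname{index}(T_{\sigma}-\lambda)=0$ for all $\lambda\notin\Gamma$. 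The key point is that it is the \emph{index} (not the full spectrum, which a compact perturbation may alter) that forces the winding number to vanish outside $\Gamma$.

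Finally I would run an argument‑principle/univalence step. Knowing $\tilde\sigma(\TT)=\Gamma$, that $\tilde\sigma$ has zero winding around every $\lambda\notin\Gamma$, and that it has a simple pole at $\infty$, the argument principle on the annuli $1<|w|<R$ (the outer boundary contributing a single positive turn from the simple pole at $\infty$, the inner boundary contributing the vanishing winding of $\sigma$) shows that $\tilde\sigma$ attains each value $\lambda\notin\Gamma$ exactly once. Thus $\tilde\sigma$ is a conformal bijection of $\CC_{\infty}\setminus\overline{\DD}$ onto $\CC_{\infty}\setminus\Gamma$ fixing $\infty$ with $\tilde\sigma'(\infty)=a_{1}>0$; by uniqueness of the Riemann map with this normalization, $\tilde\sigma=\phi$, whence $\sigma=\phi|_{\TT}$. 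I expect the main obstacle to be the operator‑theoretic identification that the essential spectrum of $D$ is exactly $\Gamma$ with vanishing index (the precise interplay of multiplication by $z$, subnormality, and the geometry of the arc), together with securing enough boundary regularity of $\tilde\sigma$ to justify the argument principle; the capacity computation and the concluding uniqueness are comparatively routine. (An alternative route would compare $D$ with the Hessenberg matrix of multiplication by $z$ in the Faber‑polynomial basis, whose diagonals are exactly the Laurent coefficients of $\phi$, but controlling the change of basis seems harder than the spectral argument above.)
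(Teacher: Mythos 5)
Your proposal is correct in substance and reaches the theorem by a genuinely different route in its two key steps, so it is worth comparing with the paper's argument. For the identification $\sigma(D)=\Gamma$, the paper invokes Mergelyan/Walsh to get $\mathcal{P}^{2}(\mu)=L^{2}_{\mu}(\Gamma)$, so that $D$ is actually \emph{normal}, whence $\sigma(D)=\Gamma$ and $\sigma_{ess}(D)=\sigma(D)$ follows from the absence of isolated points; you instead use subnormality plus connectedness of $\CC_{\infty}\setminus\Gamma$ (no holes to fill in), which is equally valid, but your inference ``the continuum has no isolated points, hence the whole spectrum is essential'' is not sufficient for a merely subnormal operator (the unilateral shift is a counterexample to that implication in general) --- you must also use that $\Gamma$ has empty interior, so that no component of $\CC\setminus\sigma_{ess}(D)$ can sit inside $\sigma(D)$ and the remaining Fredholm points of $\sigma(D)$ would be isolated. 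For univalence, the paper cites Pommerenke's cluster-set theorem: $d$ is analytic outside $\overline{\DD}$, its boundary cluster set is $\Gamma$, which has empty interior and does not disconnect $\CC_{\infty}$, and $\infty$ is attained exactly once, so $d$ is conformal. You replace this with the index computation $\operatorname{wind}(\sigma,\lambda)=-\operatorname{ind}(T_{\sigma}-\lambda)=-\operatorname{ind}(D-\lambda)=0$ for $\lambda\notin\Gamma$ plus the argument principle; this is a legitimate and arguably more self-contained mechanism (it uses only the classical index formula for Toeplitz operators with continuous symbol), though you still owe the reader the boundary regularity you flag (either note that an $H^{2}$-type function whose boundary Fourier series is continuous extends continuously to $\TT$, or run the argument principle on circles $|z|=1+\varepsilon$ and use homotopy invariance of the winding number) and the passage from ``each $\lambda\notin\Gamma$ is attained exactly once'' to ``the image is exactly $\CC_{\infty}\setminus\Gamma$'' (injectivity via the open mapping theorem and the empty interior of $\Gamma$, and exclusion of values in $\Gamma$ because preimages of nearby boundary values escape to $\TT$). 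The capacity step $a_{1}=\lim\gamma_{n}/\gamma_{n+1}=\lim\gamma_{n}^{-1/n}=\capa(\Gamma)$ is identical to the paper's step (iv). Net: the paper buys univalence cheaply from a cluster-set theorem, while your route makes the Fredholm index do that work explicitly; both are sound once the details above are supplied.
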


\begin{proof}
Suppose that
$$T= \left (
\begin{array}{ccccc}
d_{0}& d_{-1} & d_{-2} & \ldots \\
d_{1} & d_{0} & d_{-1} & \ldots \\
0 & d_{1} & d_{0} & \ldots \\
0 & 0 & d_{1} & \ldots \\
\vdots & \vdots & \vdots & \ddots
\end{array}
\right ).$$
Hence the symbol of $T$
$f_T:\TT \to \CC$ is given by
$$f_{\tiny { T}}(z)=d_1 z+ d_0 +d_{-1} \dfrac{1}{z} +d_{-2} \dfrac{1}{z^2}+ \cdots =\sum _{k=-1}^{\infty } d_{-k} {z^{-k}}.$$
Consider the formal series
$$ d(z)= \sum _{k=-1}^{\infty } d_{-k} {z^{-k}} \text{ with } \: \: |z|>1,$$
with defines a continuous function on $\CC\setminus \DD $.

On the other hand, by the Riemann mapping theorem \cite{pommerenke, jakimovski}, given $\Gamma \subset \CC $ compact, since $\CC_{\infty}  \setminus \Gamma$ is simply connected, there is a unique conformal mapping  $\phi: \mathbb{C}_{\infty} \setminus \overline{\DD }\rightarrow \CC_{\infty}  \setminus \Gamma$, with the expression
$$\phi (z)=c_1 z+ c_0 +c_{-1} \dfrac{1}{z} +c_{-2} \dfrac{1}{z^2}+ \cdots =\sum _{k=-1}^{\infty } c_{-k} {z^{-k}},$$
such that $c_1 >0$, with $c_1 =\capa (\Gamma) $ the capacity of $\Gamma $.

In order to prove the theorem it suffices to show that $d(z)$ satisfies the properties that determine the Riemann mapping function.

We will prove the following:
\begin{enumerate}
\item [(i)] $d(\TT)=\Gamma $.
\item [(ii)] The function $d(z)$ is analytic in $\CC \setminus \overline{\DD }$.
\item [(iii)]The function  $d(z)$ is univalent in $\mathbb{C}_{\infty} \setminus \overline{\DD }$.
\item [(iv)]The first coefficient of $d(z)$ is the capacity of $\Gamma $, i. e. $d_1=\capa (\Gamma )$.
\end{enumerate}

\noindent (i)
Observe first that $ \sigma_{ess}(T)=f_T(\TT )= d(\TT) $, since the symbol is a continuous function \cite{bottcher}.

On the other hand, Feintuch  \cite{feintuch}  characterized the uniformly asymptotically Toeplitz operators as just the compact perturbations of the Toeplitz
operators. Then since the essential spectrum (see, for example, \cite{conway2} for a definition) is invariant via compact perturbations \cite{conway}, we have that $\sigma_{ess}(D)= \sigma_{ess}(T)$.

On the other hand, since $\supp(\mu)=\Gamma$ is a compact set with empty interior and $\CC _{\infty }\setminus \Gamma $ is connected, we can apply Merguelyan's theorem (or Walsh' s theorem for a Jordan arc)  \cite[p.97]{gaier} to deduce that every continuous function in $\Gamma$ can be uniformly approximated by polynomials.

Since the set of continuous functions
 with compact support
is dense in $L^{2}_{\mu}(\Gamma)$, then $L^{2}_{\mu}(\Gamma)=P^{2}_{\mu}(\Gamma)$. Therefore, $D$ defines a normal operator on $\ell ^2$, hence $\sigma (D)=\Gamma $, see \cite[p. 41]{conway2}.
Since the support has no isolated points we have $\sigma(D)=\sigma_{ess}(D)  $.

Therefore,
$$d(\TT )=\sigma_{ess} (T)=\sigma_{ess} (D)=\sigma (D)=\Gamma .$$

\medskip

\noindent (ii)
To show that $d(z)=d_1 z+ \displaystyle{\sum _{k=0}^{\infty } d_{-k} z^{-k}}$ is  analytic in $\CC  \setminus \overline{\DD } $, we have, on the one hand, that the function given by the first summand $d_1 z $ is analytic in $\CC  $.
On the other hand, consider $\widetilde{d}(z)= \displaystyle{\sum _{k=0}^{\infty } d_{-k} z^{-k}}$.

If $|z|>1$, applying Chauchy-Schwarz inequality we have
$$\sum_{k=0}^{\infty}  \left|\dfrac{d_{-k} }{z^{k}}\right| \leq  \sqrt{\sum_{k=0}^{\infty} |d_{-k}|^{2}} \;
\sqrt{\sum_{k=0}^{\infty}|z|^{-2k}} \leq ||\widetilde{d} ||_2 \sqrt{\frac{1}{1-|z|^2}}<\infty .$$

Therefore, $\widetilde{d}(z)$ is
 analytic in $\CC  \setminus \overline{\DD } $.

\medskip

\noindent (iii)
Now, we have that $d(z)$ is  analytic in $\CC \setminus \overline{\DD } $ and it is continuous in $\CC \setminus \DD $. Moreover,  the set of limit points of $d(z)$ as $|z|\rightarrow 1 $ agrees with $d(\TT )=\Gamma $ which is bounded, without interior points and does not disconnect $ \CC _{\infty } $. Therefore  $d(z)$ is a conformal map in $\CC_{\infty} \setminus \overline{\DD } $ (see
\cite[Th 1.1]{pommerenke}).

\medskip

\noindent (iv) We finally show that $d_1=\capa (\Gamma )$.

The elements $d_{n+1,n}$ of the subdiagonal of the matrix $D$ agree with the quotients $\gamma_{n}/\gamma_{n+1}$. Since $\displaystyle{\lim_{n\rightarrow \infty } d_{n+1,n}=d_{1}}$, then
\[ d_1 =\lim_{n \to \infty} d_{n+1,n}=\lim_{n \to \infty} \frac{\gamma_{n}}{\gamma_{n+1}} = \lim_{n \to \infty} \frac{1}{\sqrt[n]{\gamma_{n}}}.\]
On the other hand, since $\mu$ is regular, then \cite{totik}
    \[ \lim_{n \to \infty} \frac{1}{\sqrt[n]{\gamma_{n}}} = \capa(\supp(\mu)).\]
Therefore, $d_1 =\capa(\supp(\mu))=\capa (\Gamma )$.
\end{proof}

\begin{coro}\label{diagonals coro}
Let  $D=(d_{ij})_{i,j=1}^{\infty}$ be the Hessenberg matrix associated with a measure $\mu $ with compact support on the complex plane.
 Assume that:
\begin{enumerate}
\item The measure $\mu$ is regular with
$\Gamma =\supp(\mu) $ a Jordan arc or a finite union of Jordan arcs
such that $\CC_\infty \setminus \Gamma $ is a simply connected set of the Riemann sphere $\CC_{\infty} $.
\item The Hessenberg matrix operator $D$ is uniformly asymptotically Toeplitz and the corresponding  limit matrix $T$   has its rows in $\ell ^1$.
\end{enumerate}
Then, the symbol of $T$ is the restriction to the unit circle of the the Riemann mapping function  $\phi:\CC_{\infty} \setminus \overline{\DD }\rightarrow \CC_{\infty}  \setminus \Gamma$ which maps conformally the exterior of the unit disk onto the exterior of the support of the measure $\mu $.
\end{coro}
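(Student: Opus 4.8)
The plan is to deduce the corollary directly from the diagonals theorem by showing that hypothesis (2) of the corollary—that the rows of the limit Toeplitz matrix $T$ belong to $\ell^1$—is strictly stronger than hypothesis (2) of Theorem \ref{diagonals th}, namely that $T$ has a continuous symbol. Once this implication is in hand, the conclusion follows verbatim, since hypothesis (1) is identical in the two statements.

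First I would translate the $\ell^1$ condition on the rows into an absolute summability condition on the Toeplitz coefficients. Because $T$ is a Hessenberg Toeplitz matrix, its only nonzero lower diagonal is the subdiagonal, with the constant value $d_1$, while the main diagonal and the superdiagonals carry the values $d_0, d_{-1}, d_{-2}, \dots$ Concretely, for $n \geq 2$ the $n$-th row is $(0,\dots,0,d_1,d_0,d_{-1},d_{-2},\dots)$, so its $\ell^1$ norm equals $|d_1| + \sum_{k=0}^{\infty}|d_{-k}|$, while the first row has $\ell^1$ norm $\sum_{k=0}^{\infty}|d_{-k}|$. Hence every row lies in $\ell^1$ if and only if
$$\sum_{k=-1}^{\infty} |d_{-k}| < \infty,$$
that is, the sequence of Toeplitz coefficients is absolutely summable.

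Next I would use this absolute summability to conclude that the symbol
$$f_T(z)=\sum_{k=-1}^{\infty} d_{-k}\, z^{-k}$$
is continuous on $\TT$. For $z \in \TT$ one has $|d_{-k} z^{-k}| = |d_{-k}|$, so the series converges uniformly on $\TT$ by the Weierstrass $M$-test; being a uniform limit of its partial sums, which are trigonometric polynomials and hence continuous, $f_T$ is continuous on $\TT$. This is the classical observation that a Toeplitz operator whose symbol has absolutely summable Laurent coefficients lies in the Wiener algebra and therefore has a continuous symbol.

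Having verified that $T$ has a continuous symbol, hypothesis (2) of Theorem \ref{diagonals th} is satisfied, and I would then simply invoke the theorem to obtain that the symbol of $T$ is the restriction to $\TT$ of the Riemann mapping function $\phi$. I do not expect a genuine obstacle in this argument; the corollary is essentially a reformulation of the theorem under a more easily verifiable hypothesis. The only point requiring care is the elementary but precise passage from ``rows in $\ell^1$'' to ``coefficients absolutely summable,'' which depends on the Hessenberg (single-subdiagonal) structure of $T$ and on the fact that the extra subdiagonal entry contributes only the finite term $|d_1|$ to each row.
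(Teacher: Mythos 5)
Your proposal is correct and follows essentially the same route as the paper: the paper's proof is a one-line observation that absolute summability of the coefficients (equivalent to the rows of $T$ lying in $\ell^1$, given the Hessenberg--Toeplitz structure) forces the symbol to be continuous on $\TT$, so that Theorem \ref{diagonals th} applies. Your version merely spells out the row-norm computation and the Weierstrass $M$-test argument that the paper leaves implicit.
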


\begin{proof}
It is well known that every function  $\sum_{k=0}^{\infty}  \left|\dfrac{d_{-k} }{z^{k}}\right| $ with $\sum_{k=0}^{\infty}  \left|d_{-k} \right| \leq \infty$ is continuous on unit circle $|z|=1$.
\end{proof}

As an illustration of Theorem 1 we consider the following example.

\begin{examp}[\bf Arc of circle] \label{arco}
We consider  $\Gamma $ an arc of the unit circle $\TT $. In this case \cite{golinskii} (see also \cite{simon1,simon2}), there exists a regular measure for which the diagonals of the Hessenberg matrix stabilize from the second element on. The monic orthogonal polynomials associated to this measure satisfy $\Psi _0 (0)=1$ and $\Psi _n (0)=\dfrac{1}{a} $ ($a>1$), if $n\geq 1$, and the corresponding Hessenberg matrix is the following unitary matrix $D$
\[
\left(
\begin{array}{cccccc}
-\dfrac{1}{a} & -\dfrac{(a^{2}-1)^{1/2}}{a^{2}} & -\dfrac{(a^{2}-1)^{2/2}}{a^{3}} & -\dfrac{(a^{2}-1)^{3/2}}{a^{4}} & -\dfrac{(a^{2}-1)^{4/2}}{a^{5}}& \cdots \\
\dfrac{(a^{2}-1)^{1/2}}{a} & -\dfrac{1}{a^{2}} & -\dfrac{(a^{2}-1)^{1/2}}{a^{3}} & -\dfrac{(a^{2}-1)^{2/2}}{a^{4}} &  -\dfrac{(a^{2}-1)^{3/2}}{a^{5}} &\cdots \\
0 & \dfrac{(a^{2}-1)^{1/2}}{a} & -\dfrac{1}{a^{2}} & -\dfrac{(a^{2}-1)^{1/2}}{a^{3}} & -\dfrac{(a^{2}-1)^{2/2}}{a^{4}}& \cdots \\
0 & 0 & \dfrac{(a^{2}-1)^{1/2}}{a} & -\dfrac{1}{a^{2}} & \dfrac{(a^{2}-1)^{1/2}}{a^{3}} &\cdots \\
\vdots & \vdots & \vdots & \vdots &  \vdots &\ddots
\end{array}
\right).
\]
Note that $S^{*n}DS^n=T$ for all $n\geq 1$ is a  Toeplitz matrix
such  that the first row is a geometric series of ratio $\dfrac{\sqrt{a^2-1}}{a}<1$ (because $a>1$). Hence the rows of $T$ are in $\ell ^1 $.

On the other hand,
$$ D-T= \left ( \begin{array}{cccc}
-\dfrac{a-1}{a^2} & -\dfrac{(a^2-1)^{1/2}(a-1)}{a^3} &-\dfrac{(a^2-1)(a-1)}{a^4}  & \ldots \\
0 & 0 & 0 & \ldots \\
0 & 0 & 0 & \ldots \\
\vdots & \vdots & \vdots & \ddots
\end{array}
\right ),$$
is compact because it is Hilbert-Schmidt:
\[ \sqrt{\sum_{i,j=1}^{\infty} |a_{ij}|^2} = \sqrt{\sum_{n=0}^{\infty}  \left ( \dfrac{(a-1)(a^2-1)^{n/2}}{a^{2+n}} \right )^2}=
\dfrac{a-1}{a}<+\infty.\]

Then we have that $D=T+K$ where $T$ is a Toeplitz operator and $K$ is compact, therefore $D$ is asymptotically uniformly Toeplitz \cite{feintuch} and the hypothesis of  Theorem 1 are satisfied. Then, the  expression of the Riemann mapping function as a Laurent series is
 \begin{align*}
\phi(z)
&=\dfrac{z\left( a-\sqrt{a^{2}-1} \ z\right) }{\sqrt{a^{2}-1}-az}
=\dfrac{\sqrt{a^2-1}}{a} z - \sum_{n=0}^{\infty } \frac{(a^2-1)^{\frac{n}{2}}}{a^{n+2} }z^{-n} .
\end{align*}

\begin{figure}[H]
\begin{center}
\includegraphics[width=5cm,keepaspectratio=true]{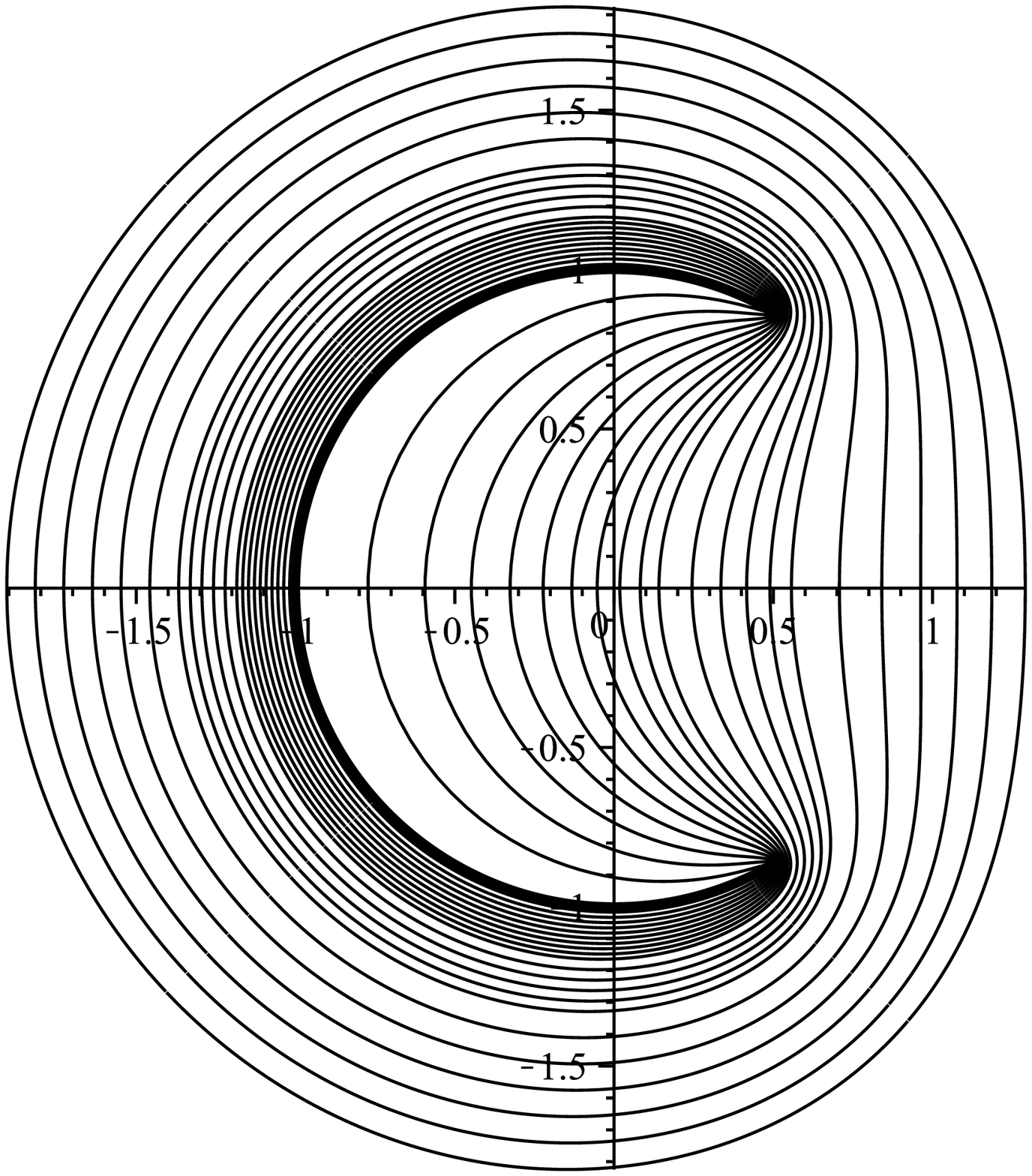}
\includegraphics[width=5cm,keepaspectratio=true]{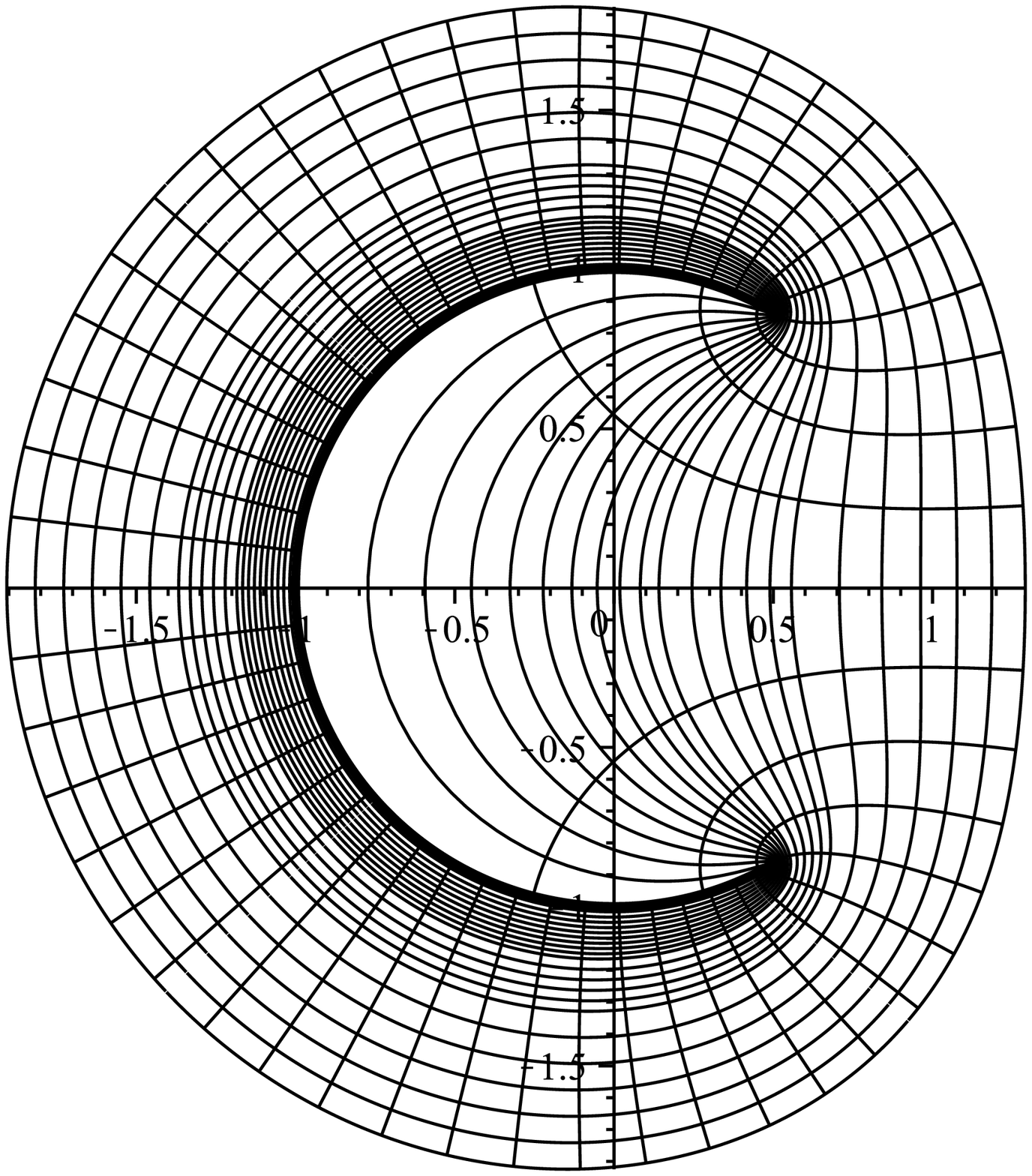}
\caption{The Riemann mapping function for the arc for $a=2$}
\end{center}
\end{figure}
\end{examp}

\section{Approximation of the Riemann mapping function}

When the Hessenberg matrix $D$ can not obtained as a closed form and it is not possible to compute the limits of the elements of its diagonals, we may ask if it is still possible to compute approximations of
the Riemann mapping function.

Specifically, under the hypothesis of theorem \ref{diagonals th}, since the coefficients of the Riemann mapping function are the limits of the elements in each of the diagonals of the Hessenberg matrix, we may ask if  the functions
   \[h_{n}(z)= d_{n+1,n}z + d_{n,n}+ \frac{d_{n-1,n}}{z} + \frac{d_{n-2,n}}{z^2} + \ldots + \frac{d_{1,n}}{z^{n-1}},\]
defined from the $n$-th column $c_n$ of  the Hessenberg matrix, are suitable approximations of the Riemann mapping function $\phi (z)$.

We show in this section that this is indeed the case.

In what follows we will denote by $\Theta _n $ the norm in $\ell ^2$ the of $n$-th column of the matrix $D-T$ as a vector of $\ell ^2$, i.e.,
\[ \Theta_{n} = \sqrt{\sum_{k=-1}^{n-1}  |d_{-k}-d_{n-k,n}|^{2}}.\]

\begin{lem}
Suppose that $D$ is an asymptotically uniformly Toeplitz  operator on $\ell^2$. Then
\[ \lim _{n\to \infty } \Theta _n =  0. \]
\end{lem}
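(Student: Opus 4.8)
The plan is to recognize $\Theta_n$ as the $\ell^2$-norm of the $n$-th column of the operator $D-T$ and then to exploit compactness. First I would do the index bookkeeping: the $n$-th column of $D$ is supported in rows $1,\dots,n+1$ (upper Hessenberg), while the $n$-th column of $T$ has entries $T_{i,n}=d_{i-n}$, which vanish for $i>n+1$ since $d_k=0$ for $k\ge 2$. Writing $i=n-k$, the two columns are compared exactly over $k=-1,0,\dots,n-1$, so that
\[ \Theta_n = \|(D-T)e_n\|_{\ell^2}, \]
where $\{e_n\}$ is the standard basis of $\ell^2$. Thus the claim $\Theta_n\to 0$ is precisely the assertion that the columns of $D-T$ tend to $0$ in norm.

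The key structural input is that $D-T$ is compact. I would get this from Feintuch's characterization (Theorem 2.4 in \cite{feintuch}), already quoted in the paper: a uniformly asymptotically Toeplitz operator is a compact perturbation of a Toeplitz operator, say $D=T_0+K$ with $T_0$ Toeplitz and $K$ compact. It then remains to identify $T_0$ with the limit $T$. For this I would use that every bounded Toeplitz operator satisfies $S^{*}T_0 S=T_0$ (a one-line entrywise check, $(S^{*}T_0S)_{ij}=(T_0)_{i+1,j+1}=(T_0)_{ij}$), hence $S^{*n}T_0S^n=T_0$ for all $n$. Consequently $S^{*n}DS^n=T_0+S^{*n}KS^n$, and since $\|S^{*n}KS^n\|\to 0$ for compact $K$ (because $S^{*n}\to 0$ strongly, so $\|S^{*n}K\|\to 0$), the uniform limit of $S^{*n}DS^n$ is $T_0$. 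By uniqueness of the limit, $T_0=T$, so $K=D-T$ is compact.

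I would then conclude with the standard fact that a compact operator carries weakly null sequences to norm-null sequences. Since the orthonormal sequence $\{e_n\}$ converges weakly to $0$ in $\ell^2$ (indeed $\langle e_n,x\rangle=\overline{x_n}\to 0$ for every $x\in\ell^2$), compactness of $K=D-T$ yields $\|(D-T)e_n\|\to 0$, that is, $\Theta_n\to 0$.

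The only genuinely delicate point is the identification $T_0=T$, i.e. ensuring that the Toeplitz matrix in the compact-perturbation decomposition coincides with the $T$ used to define $\Theta_n$; this is exactly what the relation $S^{*n}TS^n=T$ secures. Everything else (the index matching giving $\Theta_n=\|(D-T)e_n\|$, and $\|Ke_n\|\to 0$) is routine. If one preferred an argument not invoking Feintuch's theorem, I would instead split the column norm at a threshold $s$: the tail obeys $\sum_{j>s}|(D-T)_{j,n}|^2=\|S^{*s}(D-T)e_n\|^2\le\|S^{*s}(D-T)S^s\|^2=\|S^{*s}DS^s-T\|^2$, which is small by uniform asymptotic Toeplitzness, while the finite head $\sum_{j=1}^{s}|(D-T)_{j,n}|^2\to 0$ as $n\to\infty$ because $d_{j,n}=\langle zP_{n-1},P_{j-1}\rangle\to 0$ (Bessel's inequality for the fixed function $\bar z P_{j-1}$) and the Toeplitz coefficients $d_{-(n-j)}\to 0$; choosing $s$ and then $n$ large gives the claim.
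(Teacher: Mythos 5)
Your proof is correct and follows essentially the same route as the paper: $D-T$ is compact by Feintuch's characterization, the basis vectors $e_n$ are weakly null, and compact operators map weakly null sequences to norm-null ones, so $\Theta_n=\|(D-T)e_n\|\to 0$. The extra care you take in identifying the Toeplitz part $T_0$ of the compact-perturbation decomposition with the uniform limit $T$ (via $S^{*n}T_0S^n=T_0$ and $\|S^{*n}KS^n\|\to 0$) is a detail the paper leaves implicit, and your argument there is sound.
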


\begin{proof}
Note that  $D-T$ is a compact operator
\cite{feintuch}.
 Since  $\{ e_{n} \} $ is a  sequence   weakly convergent to 0, then $\{ (D-T) e_{n-1} \} $ converges strongly to 0.

Therefore,
$ \Theta _n  =  \| (D-T)e_{n-1}\|_2 \to 0$.
\end{proof}

\begin{thm} \label{app unif}
Under the hypothesis of Theorem \ref{diagonals th},
the sequence of functions
$$h_n(z)=d_{n+1,n}z + d_{n,n}+ \frac{d_{n-1,n}}{z} + \frac{d_{n-2,n}}{z^2} + \ldots + \frac{d_{1,n}}{z^{n-1}}$$
converges uniformly to the Riemann mapping function  $\phi(z)$  on any compact set $K \subset \CC \setminus \overline{\DD }$.
\end{thm}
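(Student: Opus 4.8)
The plan is to show that $h_n(z) \to \phi(z)$ by comparing each $h_n$ with the symbol $d(z) = f_T(z) = \phi(z)$ established in Theorem \ref{diagonals th}, and controlling the difference in terms of the column norms $\Theta_n$, which tend to $0$ by the preceding Lemma. The natural first step is to write out the difference: since the coefficients of $\phi(z) = d(z)$ are the Toeplitz limits $d_{-k}$, while the coefficients of $h_n(z)$ are the actual entries $d_{n-k,n}$ of the $n$-th column, I would estimate
\[
|h_n(z) - \phi(z)| \le \left| \sum_{k=-1}^{n-1} (d_{n-k,n} - d_{-k}) \, z^{-k} \right| + \left| \sum_{k=n}^{\infty} d_{-k}\, z^{-k} \right|.
\]
The second sum is the tail of the convergent Laurent series for $\phi$, which I can control directly; on a compact set $K \subset \CC \setminus \overline{\DD}$ we have $|z| \ge r > 1$ uniformly, so the tail is bounded by a geometric-type estimate and goes to $0$ as $n \to \infty$, uniformly on $K$.

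For the first sum, which is the genuinely column-dependent piece, I would apply the Cauchy--Schwarz inequality exactly as in part (ii) of the proof of Theorem \ref{diagonals th}: writing $c_k = d_{n-k,n} - d_{-k}$, I bound
\[
\left| \sum_{k=-1}^{n-1} c_k\, z^{-k} \right| \le \sqrt{\sum_{k=-1}^{n-1} |c_k|^2} \; \sqrt{\sum_{k=-1}^{n-1} |z|^{-2k}} \le \Theta_n \cdot \sqrt{\sum_{k=-1}^{\infty} |z|^{-2k}}.
\]
The first factor is precisely $\Theta_n$, which tends to $0$ by the Lemma, and the second factor is bounded uniformly on $K$ by the geometric series $\sum |z|^{-2k}$ evaluated at $|z| = r > 1$ (the $k=-1$ term contributing a factor $|z|^2 \le \max_{K}|z|^2$, which is finite since $K$ is compact). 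Combining the two bounds gives a uniform estimate on $K$ of the form $|h_n(z) - \phi(z)| \le C_K \, \Theta_n + (\text{tail}_n)$, where both terms vanish as $n \to \infty$ independently of $z \in K$.

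The main obstacle to watch is making the index bookkeeping consistent: the $\ell^2$ norm $\Theta_n$ as defined runs over $k = -1, \dots, n-1$, so I must ensure the Cauchy--Schwarz splitting aligns the index of summation in $h_n$ with that defining $\Theta_n$, and that the geometric factor is genuinely summable, which rests on $|z| > 1$ strictly (guaranteed since $K$ is disjoint from $\overline{\DD}$). A secondary point is that I am implicitly using $\phi(z) = d(z)$ as functions on $\CC \setminus \overline{\DD}$, which follows from Theorem \ref{diagonals th} via uniqueness of the Riemann map together with the identity of their boundary symbols; once that identification is in hand the argument is a clean two-term estimate and requires no further structural input beyond compactness of $K$ and the Lemma.
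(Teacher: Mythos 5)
Your proposal is correct and follows essentially the same route as the paper: the same two-term decomposition of $h_n-\phi$, Cauchy--Schwarz applied to the column-difference sum to extract the factor $\Theta_n$ (which tends to zero by the Lemma), and a uniform bound on the tail of the Laurent series using $|z|\ge r>1$ on $K$. The only cosmetic difference is that the paper also estimates the tail term via Cauchy--Schwarz and the $\ell^2$ summability of the symbol's coefficients, whereas you invoke a direct geometric estimate; both are valid.
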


\begin{proof}
Consider a compact subset $K\subset \CC \setminus \overline{\DD }$ and consider $\varepsilon >0$. Since $K$ is compact, there exist $r,R\in \RR $ such that $1<r\leq |z|\leq R$ for every $z\in K$.

For every $z\in K$, we have
\begin{align}
|h_{n}(z)-\phi(z)|&= \notag \\
=&\left| (d_{1}-d_{n+1,n})z + (d_{0}-d_{n,n})+ \frac{d_{-1}-d_{n-1,n}}{z}+\cdots \right. \notag \\
&\left. \dots +\frac{d_{1-n}-d_{1,n}}{z^{n-1}} + \sum_{k=n}^{\infty} d_{-k} \frac{1}{z^{-k}} \right| \notag \\
\leq & \sum_{k=-1}^{n-1} \left|\dfrac{d_{-k}-d_{n-k,n}}{z^{k} }\right| + \sum_{k=n}^{\infty} \left|\dfrac{d_{-k} }{z^{k}}\right|
 \label{desigual} .
\end{align}

Applying  Cauchy-Schwarz inequality to the second summand of inequality \ref{desigual}, we have
$$\sum_{k=n}^{\infty}  \left|\dfrac{d_{-k} }{z^{k}}\right| \leq  \sqrt{\sum_{k=n}^{\infty} |d_{-k}|^{2}} \;
\sqrt{\sum_{k=n}^{\infty}|z|^{-2k}}
\leq \sqrt{ \frac{1}{r^2-1} }\sqrt{\sum_{k=n}^{\infty} |d_{-k}|^{2}}
.$$ \label{theta}
The last factor is the tail of a vector in $\ell ^2$. Therefore, given $\varepsilon >0$, there exists $N_0 \in \NN $ such that, for every $n<N_0$,
$\sqrt{\sum_{k=n}^{\infty} |d_{-k}|^2}<\sqrt{r^2-1} \dfrac{\varepsilon }{2}$.

In a similar way, for the first summand of inequality (\ref{desigual}), we have
$$ \sum_{k=-1}^{n-1} \left|\dfrac{d_{-k}-d_{n-k,n}}{z^{k} }\right| \leq  \sqrt{\sum_{k=-1}^{n-1}  |d_{-k}-d_{n-k,n}|^{2}} \;
\sqrt{\sum_{k=-1}^{n-1}|z|^{-2k}} =\Theta _n \; \sqrt{\sum_{k=-1}^{n-1}|z|^{-2k}} .$$ \label{theta}
Then, for every $z\in K$,
\[ \sum_{k=-1}^{n-1}|z|^{-2k} \leq R^2+\sum_{k=0}^{\infty}r^{-2k} = R^2+\frac{1}{r^2-1}=C. \]
Since $\Theta _n$ converges to zero, there exists $N_1 \in \NN $ such that, for every $n>N_1$, $\Theta _n <\dfrac{\varepsilon }{2\sqrt{C}}$.

Taking $N=\max \{N_0,N_1\}$ we have that
\[ |h_{n}(z) - \phi(z)|< \epsilon\]
for every $z \in K$, for every $n>N$.
\end{proof}

\begin{remark}

Under the hypothesis of Theorem \ref{diagonals th} we consider the sequence $\{c_n\}_{n=1}^\infty $ of column vectors of the matrix $D-T$.  Since every $c_n$ has at most $n$ non null elements, we can calculate its norm in $\ell ^1$ and in $\ell^2 $. We denote these norms by $\theta _n$ and $\Theta _n$:
\begin{align*}
\theta n&=\| c_n\|_1=\sum _{k=-1}^n |d_{n-k,n}-d_{-k}|,\\
\Theta n&=\| c_n\|_2=\sqrt{\sum _{k=-1}^n |d_{n-k,n}-d_{-k}|^2}.
\end{align*}

Theorem \ref{app unif} assures the   uniform convergence of $h_n(z)$ to the Riemann mapping function  $\phi(z)$  on any compact set $K \subset \CC \setminus \overline{\DD }$.

Now we analyse this convergence when  $z=re^{i\theta}$ for $r\geq 1$.

\begin{itemize}
\item[i)] For $r>1$, applying Cauchy-Schwarz inequality we have

\begin{align*}
|h_{n}(re^{i\theta})-\phi(re^{i\theta})|& \leq  \sum_{k=-1}^{n-1} \left|\dfrac{d_{-k}-d_{n-k,n}}{z^{k} }\right| + \sum_{k=n}^{\infty} \left|\dfrac{d_{-k} }{z^{k}}\right| \\
& \leq  \Theta _n \; \sqrt{r^2+\frac{1}{r^2-1}} + ||\widetilde{\phi }||_2  \frac{r}{\sqrt{r^2-1}}\;  \frac{1}{r^n}
 \label{desigual} .
\end{align*}

Therefore, on each circle of radius $r$, the order of convergence of $h_n$ to $\phi $ will be at least that of $\Theta _n$ to zero because the other summand is of the order of $\frac{1}{r^n}$, $r>1$.

\item [ii)] For $r=1$, we need some additional hypothesis. Under the hypothesis of Corollary \ref{diagonals coro},
the coefficients of $\phi $ are in $\ell ^1$. Now we have

 \begin{equation*} \label{hnphi}
 |h_{n}(e^{i\theta})-\phi(e^{i\theta})| \leq \sum_{k=-1}^{n-1} \left|d_{-k}-d_{n-k,n}\right| + \sum_{k=n}^{\infty} \left|{d_{-k} }\right| \leq \theta _n+\sum_{k=n}^{\infty} \left|{d_{-k} }\right|
 \end{equation*}

The last summand is the tail of a vector in $\ell ^1$ which goes to zero. Then, the convergence  $\theta _n\too 0$   assures the convergence of $h_n \too \phi $ on the unit circle.
\end{itemize}
\end{remark}

\begin{thm}\label{app Kr}
 Under the hypothesis of Theorem 1, for every $\varepsilon >0$ there is a $\delta >0$ such that for every $r\in (1, \delta +1)$ there is a natural number $N$, such that for all $n\geq N$ we have
$$ |h_n(re^{i\theta })-\phi (e^{i\theta }) | \leq \varepsilon ,$$
for every $\theta \in [0,2\pi ]$.
\end{thm}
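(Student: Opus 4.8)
The plan is a standard two-$\varepsilon$ argument that reaches the boundary point $e^{i\theta}$ through the nearby exterior point $re^{i\theta}$, where Theorem \ref{app unif} already applies. The governing estimate is the triangle inequality
\[
|h_n(re^{i\theta}) - \phi(e^{i\theta})| \le |h_n(re^{i\theta}) - \phi(re^{i\theta})| + |\phi(re^{i\theta}) - \phi(e^{i\theta})|,
\]
and I would bound each summand by $\varepsilon/2$: the first by the uniform approximation of $\phi$ by $h_n$ on a fixed compact circle, the second by the continuity of $\phi$ up to the unit circle.

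For the second summand, recall that in the proof of Theorem \ref{diagonals th} the map $d(z)=\phi(z)$ was shown to be continuous on all of $\CC \setminus \DD$, including $\TT$. Restricting to the compact annulus $A=\{z : 1 \le |z| \le 2\}$, the map $\phi$ is then uniformly continuous there. Hence, given $\varepsilon>0$, there is $\delta \in (0,1]$ such that $z,w \in A$ with $|z-w|<\delta$ forces $|\phi(z)-\phi(w)|<\varepsilon/2$. Taking $z=re^{i\theta}$ and $w=e^{i\theta}$, one has $|z-w|=r-1$, so whenever $r \in (1,1+\delta)$ (which keeps both points in $A$) the bound $|\phi(re^{i\theta})-\phi(e^{i\theta})|<\varepsilon/2$ holds simultaneously for every $\theta \in [0,2\pi]$. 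This choice of $\delta$ depends only on $\varepsilon$, as required by the statement.

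Next I would fix an arbitrary $r \in (1, 1+\delta)$ and consider the circle $C_r=\{z : |z|=r\}$, which is a compact subset of $\CC \setminus \overline{\DD }$. By Theorem \ref{app unif}, $h_n \to \phi$ uniformly on $C_r$, so there exists $N=N(r)$ such that $|h_n(re^{i\theta})-\phi(re^{i\theta})|<\varepsilon/2$ for all $n \ge N$ and all $\theta$. Combining this with the previous estimate through the displayed inequality yields $|h_n(re^{i\theta})-\phi(e^{i\theta})| \le \varepsilon$ for every $\theta$ and every $n \ge N$, which is exactly the claim.

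The only genuinely nontrivial ingredient is the continuity of the conformal map $\phi$ up to the unit circle, a Carath\'eodory-type boundary regularity fact for the support $\Gamma$ that was already secured in the proof of the main theorem; everything else is the routine combination of uniform continuity with uniform convergence. I expect no real obstacle here beyond keeping the quantifiers in the order $\varepsilon \to \delta \to r \to N$ that the statement prescribes, and in particular choosing $\delta$ from the uniform continuity of $\phi$ \emph{before} invoking Theorem \ref{app unif} on the circle $C_r$.
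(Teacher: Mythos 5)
Your proposal is correct and follows essentially the same route as the paper's proof: the same triangle-inequality decomposition through $\phi(re^{i\theta})$, the same compact annulus $\{z : 1\le |z|\le 2\}$ to extract $\delta$ from the uniform continuity of $\phi$, and the same appeal to Theorem \ref{app unif} on the circle $|z|=r$ to get $N$. Your explicit justification that $\phi$ is continuous up to $\TT$ (via the continuity of $d(z)$ on $\CC\setminus\DD$ from the main theorem) is a slightly more careful version of what the paper simply asserts.
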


\begin{proof}
Consider $\varepsilon >0$. Consider the compact set  $K=\{ z : 1\leq |z|\leq 2\}$. Since the function $\phi (z)$ is uniformly continuous in $K$, there exists  $\delta >0$, such that for every $r\in [1,1+\delta)$, we have
$$ |\phi(re^{i\theta })-\phi (e^{i\theta })| < \frac{\varepsilon}{2} ,$$
for every $\theta \in [0,2\pi ]$.

We consider now, for every $r\in (1,1+\delta )$, the compact set $K_r=\{ z\in \CC :|z|=r\} $. By the uniform convergence of $h_n $ on compact sets of $\CC \setminus \overline{\DD }$, established in Proposition 1,
there exists $n \in \NN $ such that, for every $n\geq N$, we have
 $$ |h_n(re^{i\theta })-\phi (re^{i\theta })| < \frac{\varepsilon}{2} $$
for all $\theta \in [0,2\pi ]$.
Then,
\begin{equation*}
 |h_n(re^{i\theta })-\phi (e^{i\theta })|\leq |h_n(re^{i\theta })-\phi (re^{i\theta })|+ |\phi (re^{i\theta })-\phi (e^{i\theta })|
  \leq \varepsilon
\end{equation*}
for every $\theta \in [0,2\pi ]$
\end{proof}

We will use this result, in the last section, to approximate the support of the measure by equipotential curves of the function $h_n(re^{i\theta})$, for  suitable $n$ and $r$.

\begin{examp}[\bf Arc of circle revisited]
Consider again the arc of circle $\Gamma $ of Example 1  where we proved that it satisfies the hypothesis of Theorem \ref{diagonals th}  and hence
\begin{align*}
\phi(z)
&=\dfrac{z\left( a-\sqrt{a^{2}-1} \ z\right) }{\sqrt{a^{2}-1}-az}
=\dfrac{\sqrt{a^2-1}}{a} z - \sum_{n=0}^{\infty } \frac{(a^2-1)^{\frac{n}{2}}}{a^{n+2} }z^{-n} .
\end{align*}
Note that, since $a>1$, $\sqrt{1-\dfrac{1}{a^2}}<1$ and the sequence of coefficients of $\phi $  is of order $\mathrm{O}  \left( \left(\sqrt{1-\dfrac{1}{a^2}}\right)^n \right) $ so it  is both in $\ell ^1 $ and $\ell ^2$.

We also showed that, in this case, the $n$-th column of $D-T$ reduces to a single element and then
\[ \Theta_{n}= \theta_{n}= \frac{a-1}{a^2} \left (\sqrt{1-\frac{1}{a^2}} \right )^{n} = \mathrm{O} \left( \left(\sqrt{1-\dfrac{1}{a^2}}\right)^n \right) .\]

Then, for every $z\in \CC \setminus \DD $,
\begin{align*}
|h_{n}(z)- \phi (z)| &\leq
\sum_{k=-1}^{n-1} \left|\dfrac{d_{-k}-d_{n-k,n}}{z^{k} }\right| + \sum_{k=n}^{\infty} \left|\dfrac{d_{-k} }{z^{k}}\right| \\
& \leq
\theta _n + \sum_{k=n}^{\infty} |d_{-k} |
= \mathrm{O} \left(\left(\sqrt{1-\dfrac{1}{a^2}}\right)^n  \right) .
\end{align*}

In particular, if $a=2$,
$$|h_{n}(z)- \phi (z)|
=\dfrac{5+2\sqrt{3}}{4} \left (\frac{\sqrt{3}}{2} \right )^{n} , \text{ for all }
|z|\geq 1.$$

Note that the difference between $\phi (z) $ and $h_n(z) $ decreases as $|z|$ increases and the worse approximation appear when $z$ is on the unit circle.

This inequality allows us to calculate the  necessary value of  $n$ to obtain a desired approximation of $\phi $. Some values can be seen in the following table:
\begin{center}
\begin{tabular}{|c|c|}
\hline
Bound of $||h_n |_{\TT }-\phi |_{\TT }||_{\infty } $ & $n$ \\ \hline
$0.2 $ & 17\\ \hline
$0.1 $ & 22\\ \hline
$0.01 $ & 38\\ \hline
$0.001 $ & 54\\ \hline
$0.0001 $ & 70\\ \hline
\end{tabular}
\end{center}

In the following figure we show the graphical result of approximating $\phi (\TT )$ using $h_n (\TT )$.
\begin{figure}[H]
\begin{center}
\includegraphics[width=4cm,keepaspectratio=true]{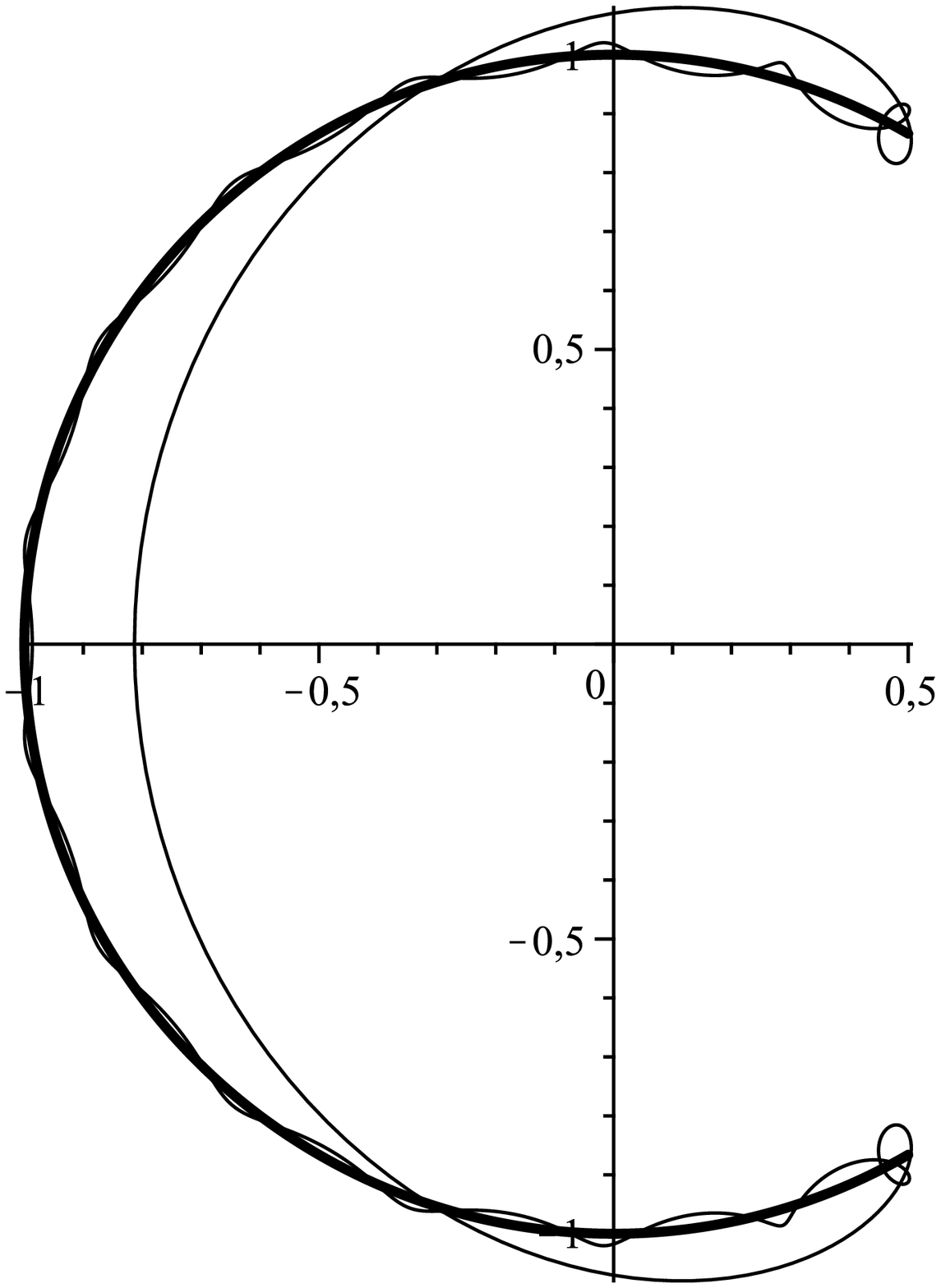}
\includegraphics[width=4cm,keepaspectratio=true]{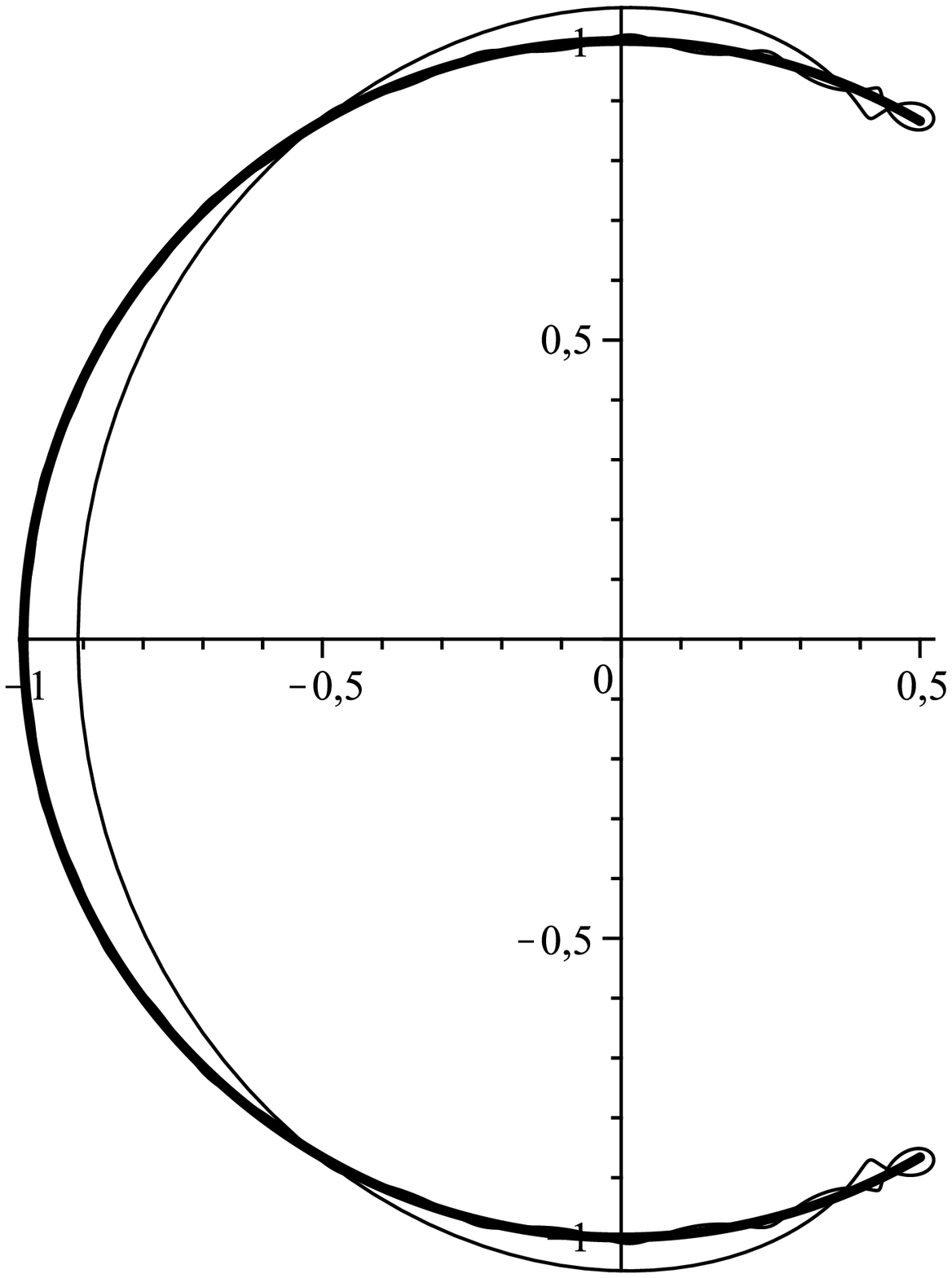}
\includegraphics[width=4cm,keepaspectratio=true]{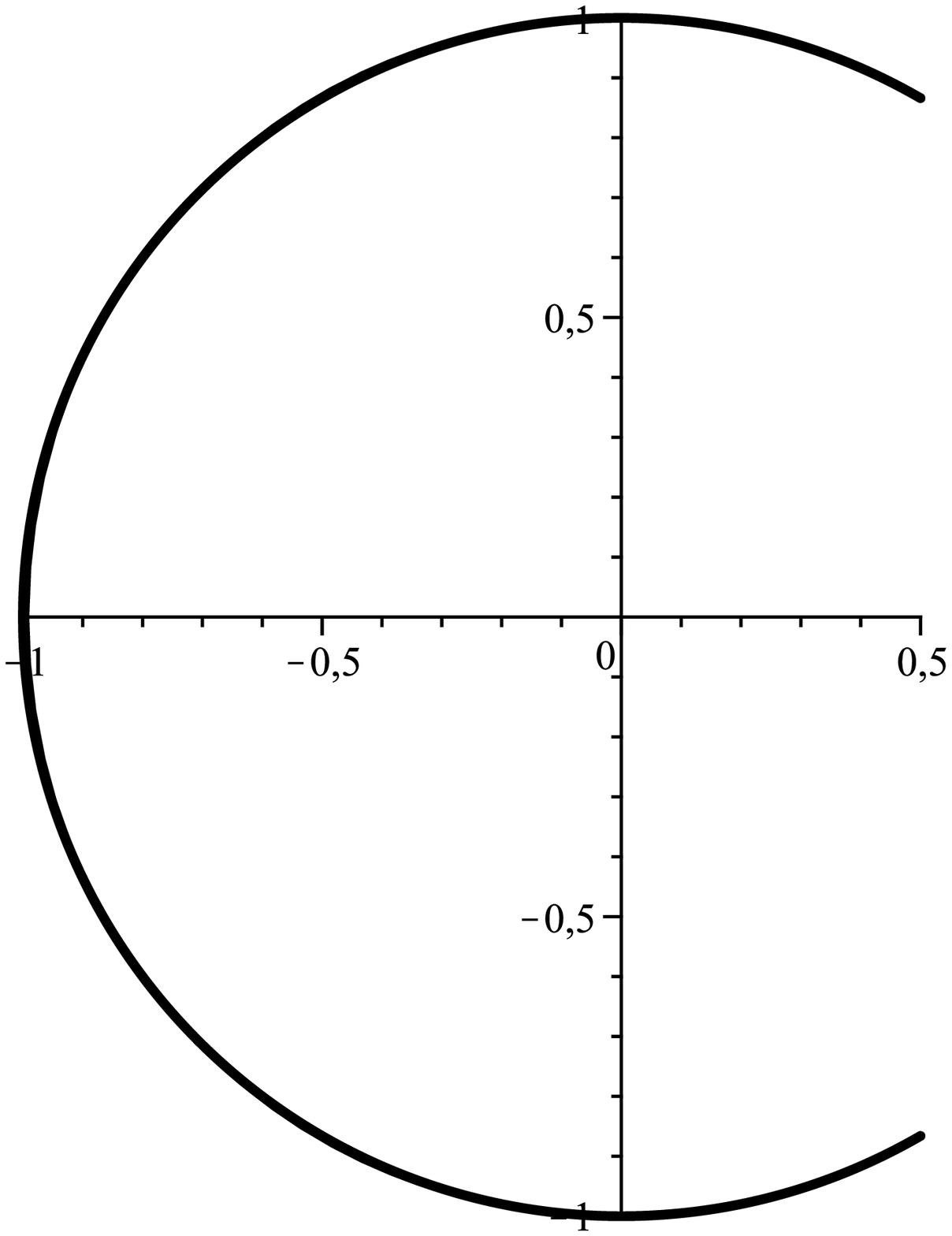}
\caption{$h_n (\TT )$ for $n=16$, $n=21$ and $n=37$, respectively}
\end{center}
\end{figure}

In the following figure we show the graphical result of approximating the equipotential lines of $\phi (re^{i\theta })$  using $h_n (re^{i\theta } )$:
\begin{figure}[H]
\begin{center}
\includegraphics[width=4cm,keepaspectratio=true]{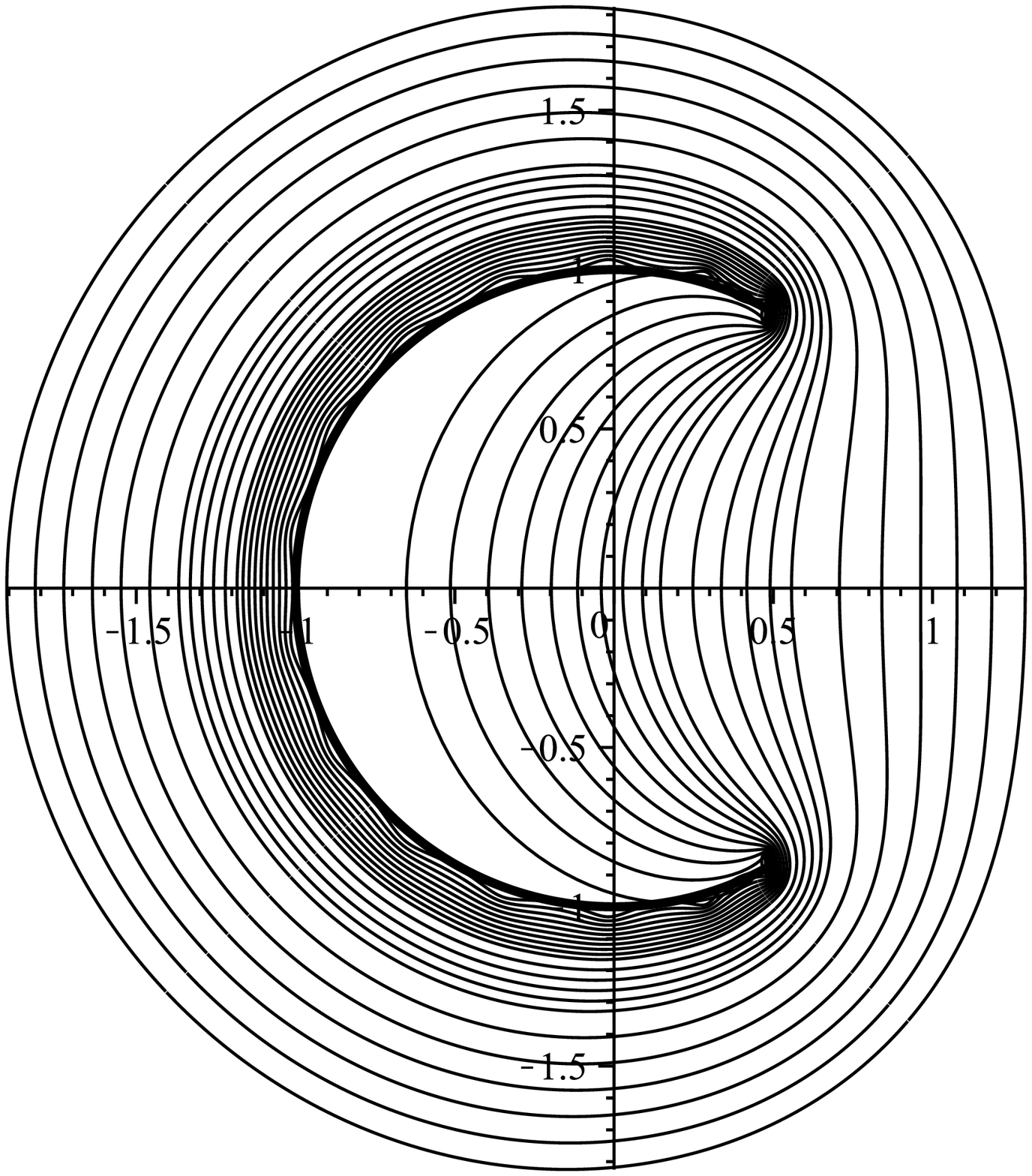}
\includegraphics[width=4cm,keepaspectratio=true]{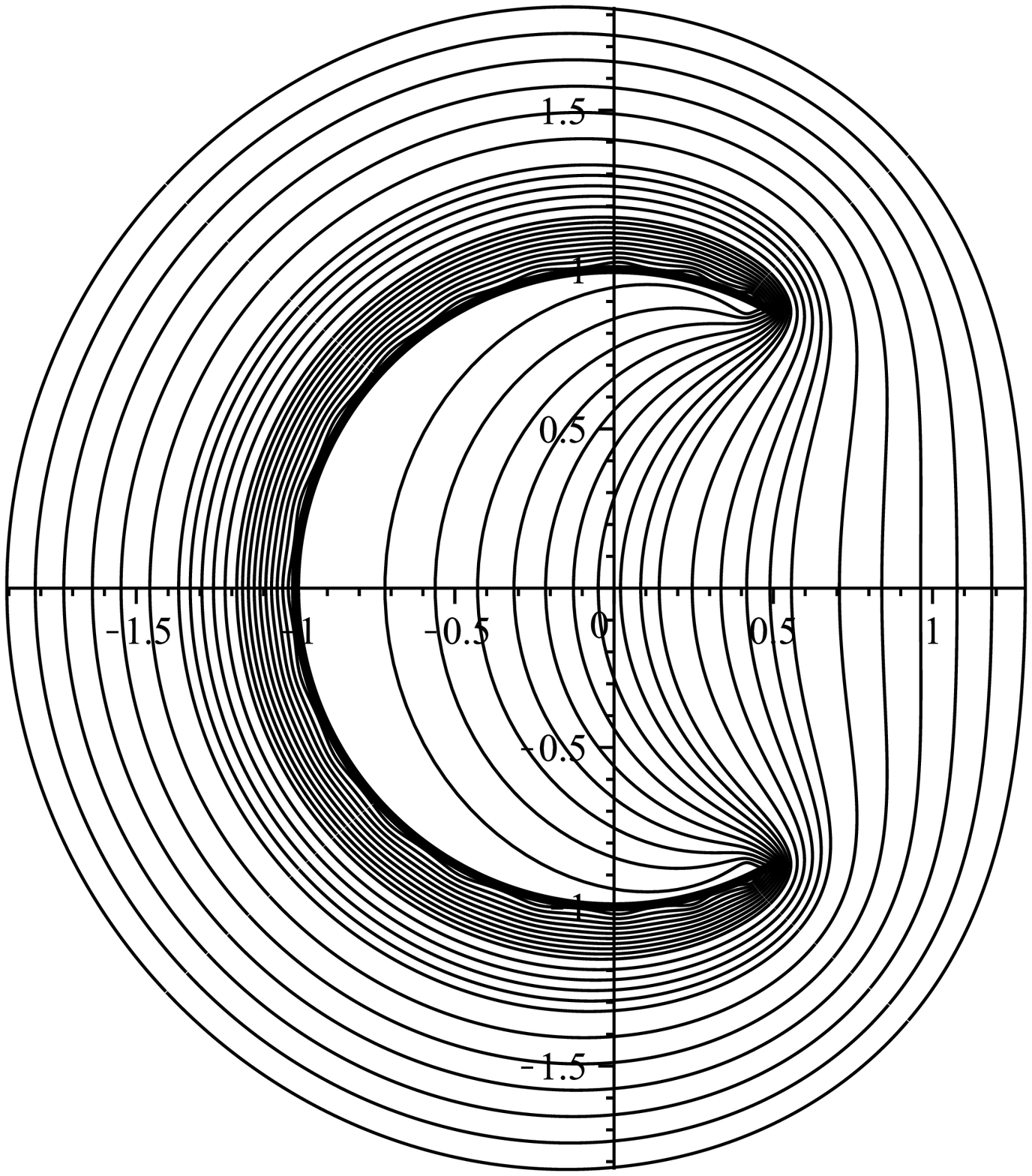}
\includegraphics[width=4cm,keepaspectratio=true]{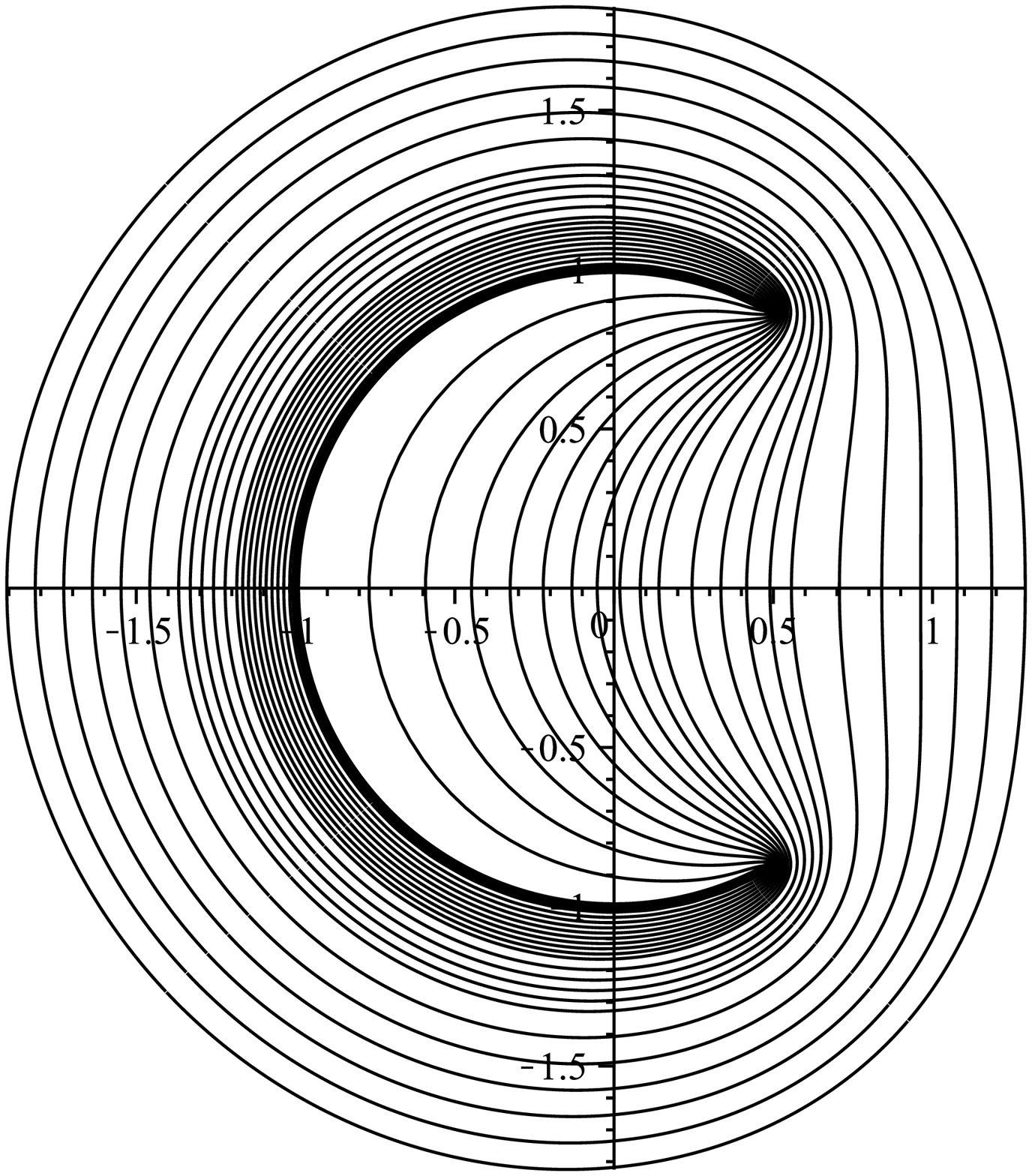}
\caption{$h_n (S_r )$ for some values of $r\in (1,1.5]$, for $n=16$, $n=21$ and $n=37$, respectively}
\end{center}
\end{figure}
\end{examp}

\section{ Numerical examples}

In this section, we present some numerical experiments using the results from the previous sections on the approximation of the Riemann mapping function.
\begin{examp}
Let $\Gamma $ be a cross-like set formed by the intervals $[-a,a]$ and $[-bi,bi]$, with $a,b \in (0, \infty)$, and let $\mu $ be the uniform measure on $\Gamma $. The Riemann mapping function
deduced from
(\cite[pg 118]{lavrentiev}) is
$$\phi(z)= \dfrac{\sqrt{a^2(z^2+1)^2+b^2(z^2-1)^2}}{2z}.$$
In the particular case of $a=b$,
$$\phi(z)=\dfrac{a\sqrt{2}}{2z}\sqrt{z^4+1}.$$

The Laurent series expansion of $\phi (z)$ in a neighbourhood of infinity is
\[\phi(z)=
\dfrac{\sqrt{a^{2} + b^{2}}}{2} \; z  +\dfrac {- 2\,b^{2} + 2\,a^{2}}{4\,\sqrt{a^{2} + b^{2}}} \; \dfrac{1}{z} + \dfrac{\sqrt{a^{2} + b^{2}}\left(\dfrac {1}{2} - \dfrac{( - 2\,b^{2} + 2\,a^{2})^{2}}{8\,(a^{2} + b^{2})^{2}} \right) }{2\,z^{3}} +\mathrm{O}\left(\dfrac {1}{z^{5}} \right).\]
Note that the first coefficient $\dfrac{\sqrt{a^{2} + b^{2}}}{2} $ agrees with the capacity of the support.

If $a=b=1$, the series expansion is
 $$\phi(z)=\frac{1}{\sqrt{2}}\sum _{n=0}^\infty \binom {\frac{1}{2}}{n} z^{1-4n}.$$
It is easy to show that the sequence of the coefficients of $\phi $ is in $\ell ^1$.

In the following image we represent the Riemann mapping function for $\Gamma $.
\begin{figure}[H]
\begin{center}
\includegraphics[width=5cm,keepaspectratio=true]{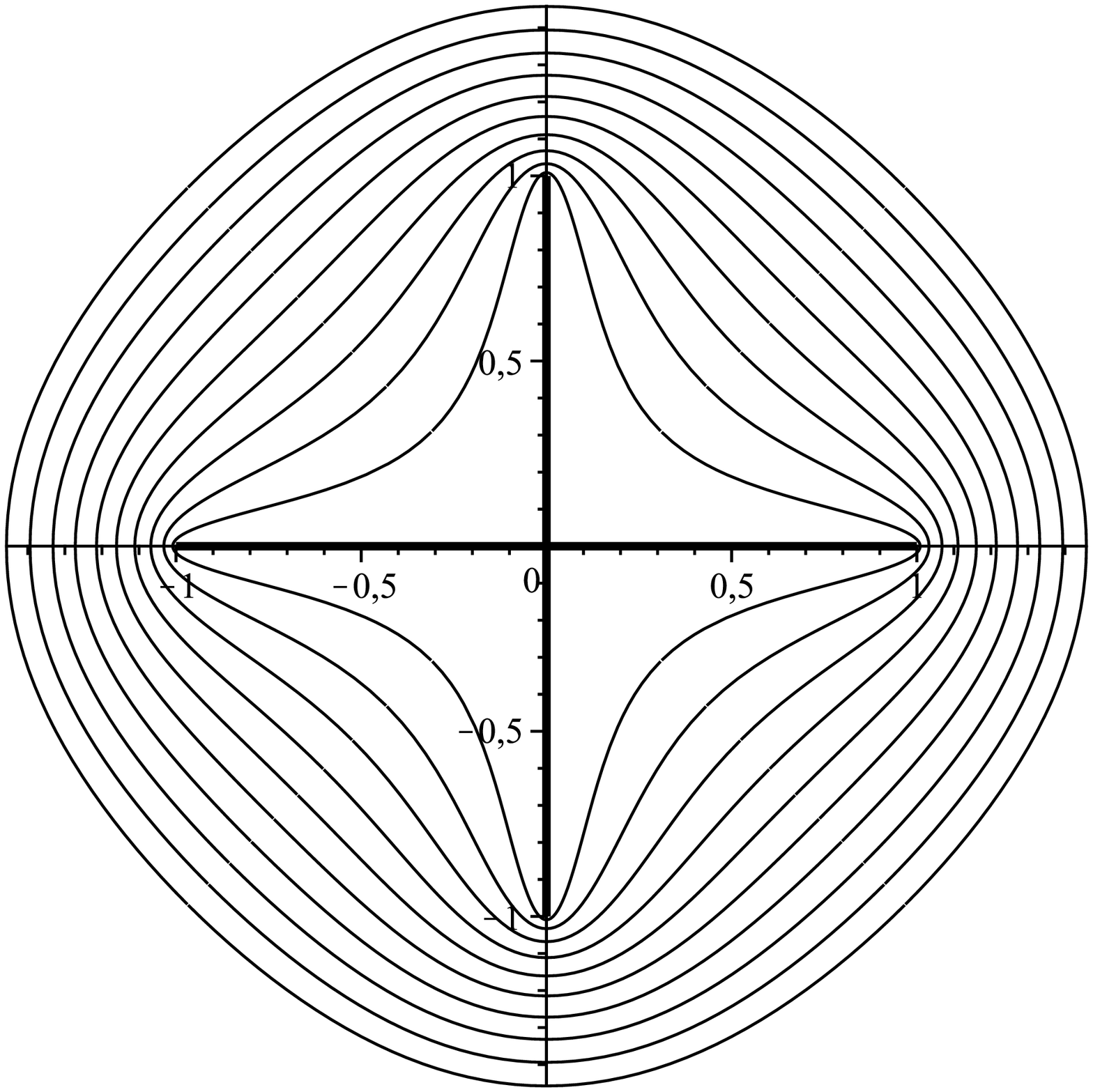}
\includegraphics[width=5cm,keepaspectratio=true]{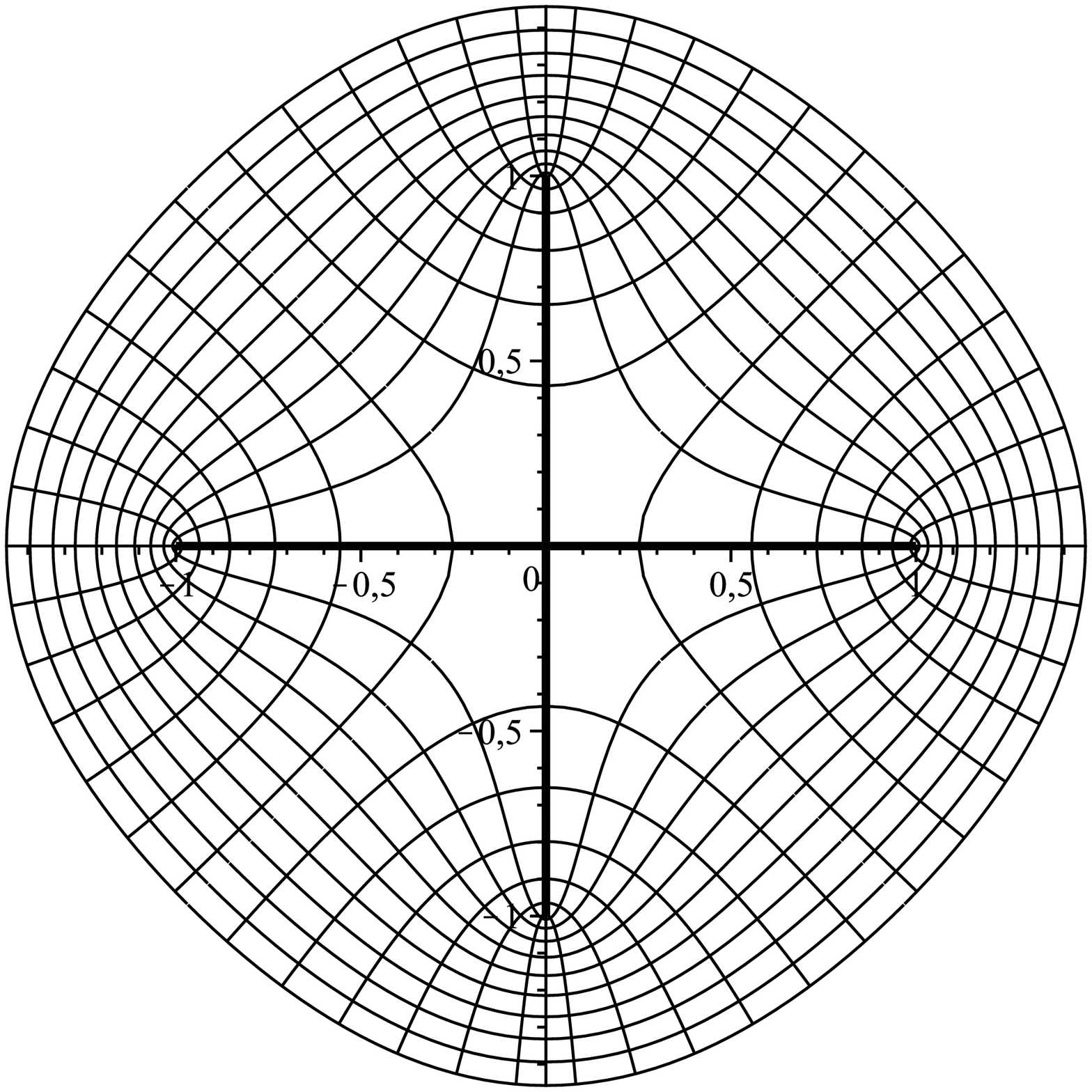}
\caption{The Riemann mapping function $\phi (z )$ for a cross-like set}
\end{center}
\end{figure}

The 9-th section of the Hessenberg matrix of $\mu $, obtained from the moment matrix, is
$$\left( \tiny{\begin {array}{cccccccccc}
0 \hspace{-0.2cm} & 0 \hspace{-0.2cm} & 0 \hspace{-0.2cm} & \dfrac{\sqrt{7}}{5} \hspace{-0.2cm} & 0 \hspace{-0.2cm} & 0 \hspace{-0.2cm} & 0 \hspace{-0.4cm} &-\dfrac{2\sqrt{15}}{45} \hspace{-0.4cm} & 0 \\
\dfrac{\sqrt{3}}{3} \hspace{-0.2cm} & 0 \hspace{-0.2cm} & 0 \hspace{-0.2cm} & 0 \hspace{-0.2cm} & \dfrac{2\sqrt{3}}{5} \hspace{-0.2cm} & 0 \hspace{-0.2cm} & 0 \hspace{-0.4cm} & 0 \hspace{-0.4cm} & -\dfrac{4\sqrt{3}\sqrt{17}}{231} \\
0 \hspace{-0.2cm} & \dfrac{\sqrt{5}\sqrt{3}}{5} \hspace{-0.2cm} & 0 \hspace{-0.2cm} & 0 \hspace{-0.2cm} & 0 \hspace{-0.2cm} & \dfrac{2\sqrt{5}\sqrt{11}}{45} \hspace{-0.2cm} & 0 \hspace{-0.4cm} & 0 \hspace{-0.4cm} & 0 \\
0 \hspace{-0.2cm} & 0 \hspace{-0.2cm} & \dfrac{\sqrt{7}\sqrt{5}}{7} \hspace{-0.2cm} & 0 \hspace{-0.2cm} &
0 \hspace{-0.2cm} & 0 \hspace{-0.2cm} & {\dfrac{2\sqrt{7}\sqrt{13}}{77}} \hspace{-0.4cm} & 0 \hspace{-0.4cm} & 0 \\
0 \hspace{-0.2cm} & 0 \hspace{-0.2cm} & 0 \hspace{-0.2cm} & \dfrac{4\sqrt{7}}{15} \hspace{-0.2cm} & 0 \hspace{-0.2cm} & 0 \hspace{-0.2cm} & 0 \hspace{-0.4cm} & \dfrac{19\sqrt{15}}{195} \hspace{-0.4cm} & 0 \\
0 \hspace{-0.2cm} & 0 \hspace{-0.2cm} & 0 \hspace{-0.2cm} & 0 \hspace{-0.2cm} & \dfrac{15\sqrt{11}}{77} \hspace{-0.2cm} & 0 \hspace{-0.2cm} & 0 \hspace{-0.4cm} & 0 \hspace{-0.4cm} & \dfrac{12\sqrt{11}\sqrt{17}}{385} \\
0 \hspace{-0.2cm} & 0 \hspace{-0.2cm} & 0 \hspace{-0.2cm} & 0 \hspace{-0.2cm} & 0 \hspace{-0.2cm} & \dfrac{7\sqrt{13}\sqrt{11}}{117} \hspace{-0.2cm} & 0 \hspace{-0.4cm} & 0 \hspace{-0.4cm} & 0 \\
0 \hspace{-0.2cm} & 0 \hspace{-0.2cm} & 0 \hspace{-0.2cm} & 0 \hspace{-0.2cm} & 0 \hspace{-0.2cm} & 0 \hspace{-0.2cm} & \dfrac{3\sqrt{15}\sqrt{13}}{55} \hspace{-0.4cm} & 0 \hspace{-0.4cm} & 0 \\
0 \hspace{-0.2cm} & 0 \hspace{-0.2cm} & 0 \hspace{-0.2cm} & 0 \hspace{-0.2cm} & 0 \hspace{-0.2cm} & 0 \hspace{-0.4cm} & 0\hspace{-0.4cm} & \dfrac{88\sqrt{17}\sqrt{15}}{1989} \hspace{-0.2cm} & 0
\end {array}} \right)
$$
In this case, a closed form for the Hessenberg matrix it is not known and it is not easy  to compute the limits of the diagonals of $D$. However, it is still possible to compute approximations of the Riemann mapping function. Specifically, if the coefficients of the Riemann mapping function are the limits of the elements in the diagonals of the Hessenberg matrix, we may consider, as approximations of the Riemann mapping function $\phi (z)$, the functions $h_n(z)$.

As opposed to the case of the arc in Example 1, we have not here an explicit formula for $\Theta _n $ and $\theta _n $, used there to estimate the degree of approximation obtained using $h_n (z)$ instead of $\phi (z)$ (see Remark 1).
In the following table we give a list of numerical values of $\Theta _n $ and $\theta _n $:
\begin{center}
\begin{tabular}{|c|c|c|c|c|c|}
\hline
$n$ & $\Theta _n $    & $\theta _n $   & $n$ & $\Theta _n $    & $\theta _n $   \\ \hline
 4  & 0.1756039179    & 0.1771699698   &  52 & 0.1435839520e-1 & 0.374355145e-1 \\ \hline
 8  & 0.8706648269e-1 & 0.1081557877   &  56 & 0.1335920853e-1 & 0.359786966e-1 \\ \hline
 12 & 0.5894618764e-1 & 0.846332410e-1 &  60 & 0.1249073290e-1 & 0.346766415e-1 \\ \hline
 16 & 0.4475241502e-1 & 0.716638451e-1 &  64 & 0.1172882241e-1 & 0.335039558e-1 \\ \hline
 20 & 0.3613474685e-1 & 0.631649554e-1 &  68 & 0.1105494437e-1 & 0.324406982e-1 \\ \hline
 24 & 0.3032967468e-1 & 0.570537158e-1 &  72 & 0.1045463567e-1 & 0.314709643e-1 \\ \hline
 28 & 0.2614682972e-1 & 0.523932383e-1 &  76 & 0.9916442395e-2 & 0.305818978e-1 \\ \hline
 32 & 0.2298656524e-1 & 0.486911124e-1 &  80 & 0.9431174829e-2 & 0.297629768e-1 \\ \hline
 36 & 0.2051319544e-1 & 0.456605530e-1 &  84 & 0.8991373805e-2 & 0.290054994e-1 \\ \hline
 40 & 0.1852386296e-1 & 0.431218007e-1 &  88 & 0.8590921072e-2 & 0.283021988e-1 \\ \hline
 44 & 0.1688863030e-1 & 0.409557583e-1 &  92 & 0.8224750644e-2 & 0.276469516e-1 \\ \hline
 48 & 0.1552035810e-1 & 0.390800153e-1 &  96 & 0.7888631635e-2 & 0.270345579e-1 \\ \hline \end{tabular}
\end{center}
We consider values of $n$ that are multiples of 4 because for these values of $n$ the approximations are worse since the matrix $D-T$ has three of every four diagonals nulls.

Some result of approximating $\supp (\mu )$ and the Riemann mapping function using this method, are shown in the followings figures.

\begin{figure}[H]
\begin{center}
\includegraphics[width=4.0cm,keepaspectratio=true]{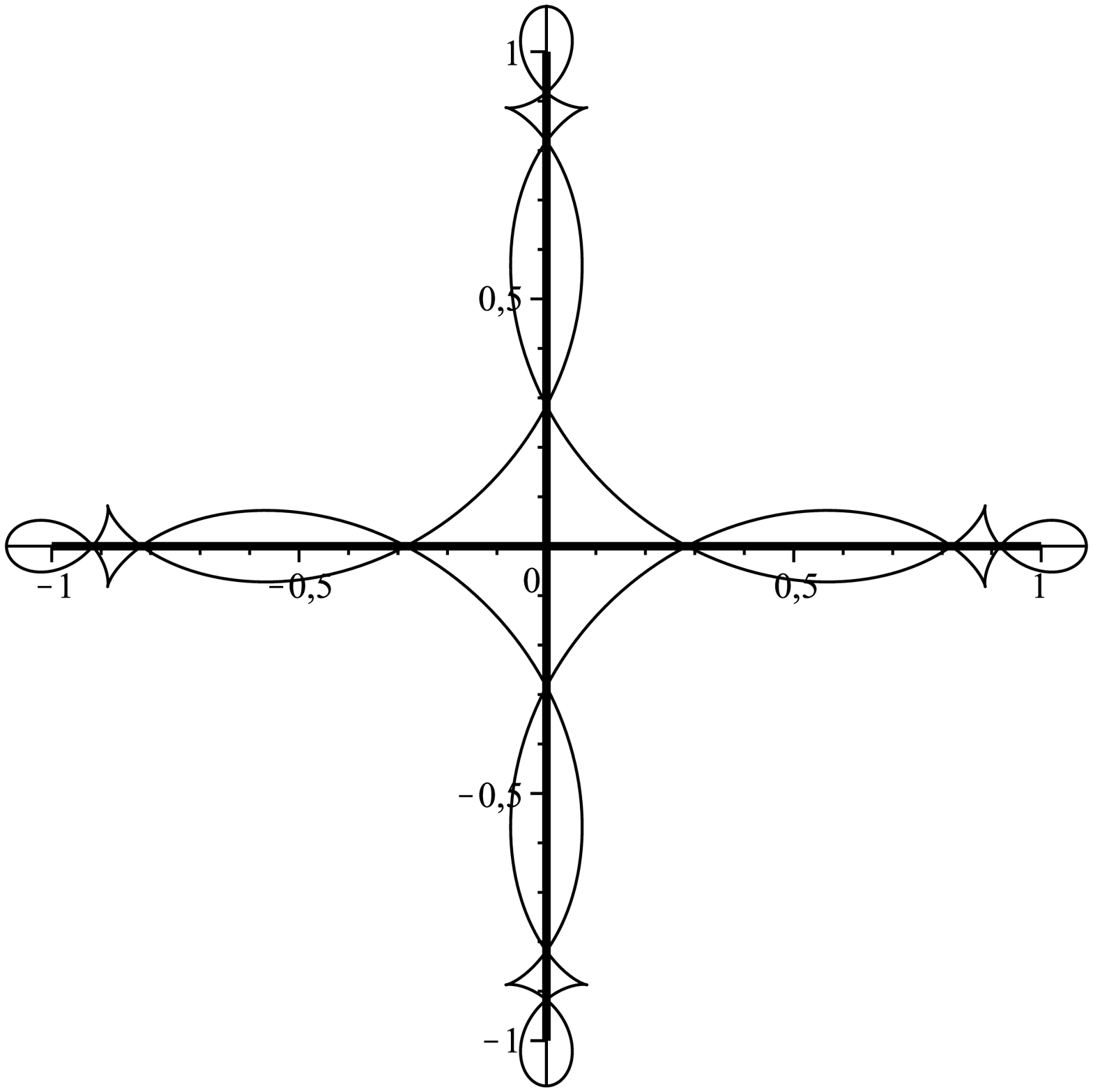}
\includegraphics[width=4.0cm,keepaspectratio=true]{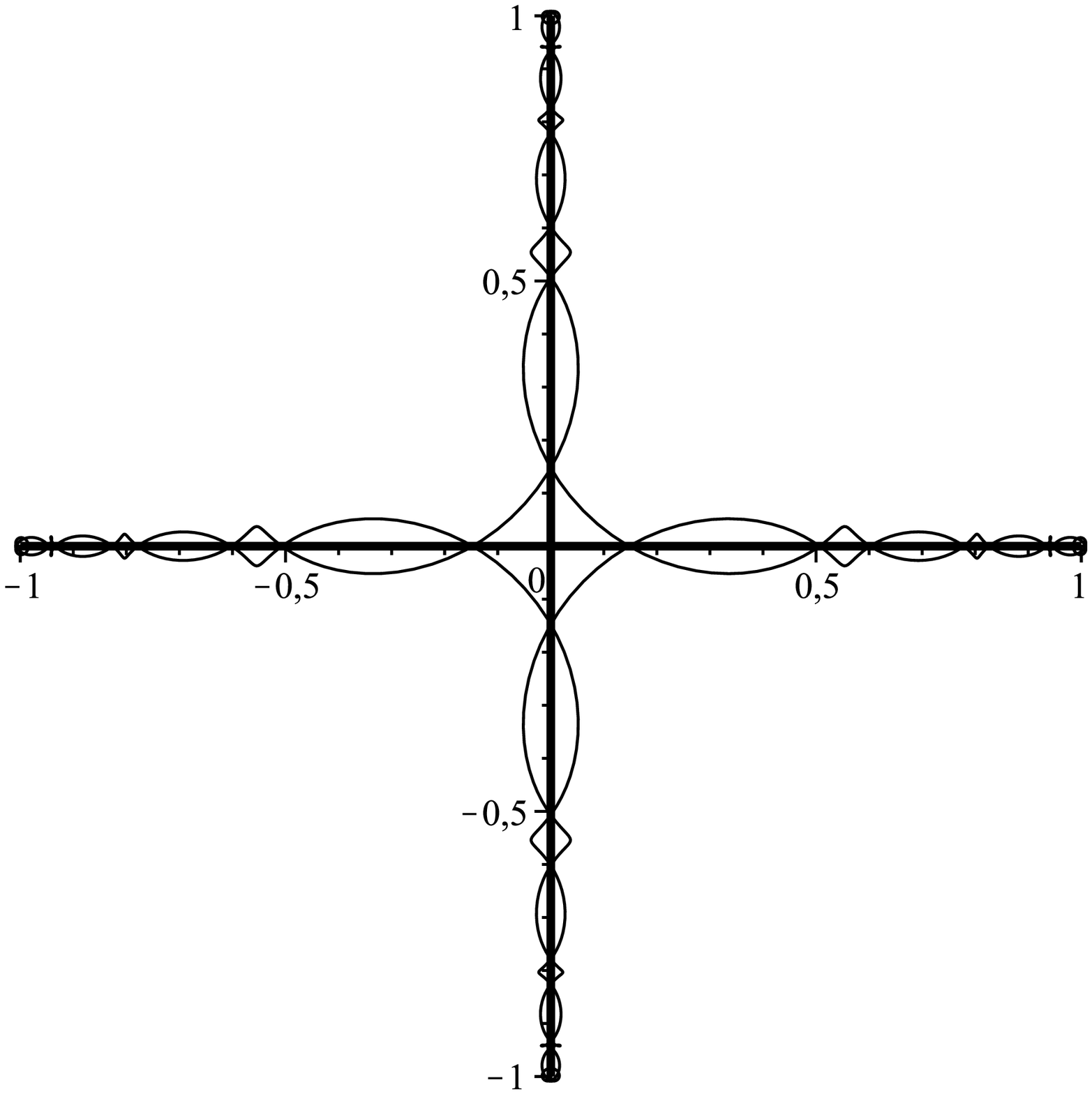}
\includegraphics[width=4.0cm,keepaspectratio=true]{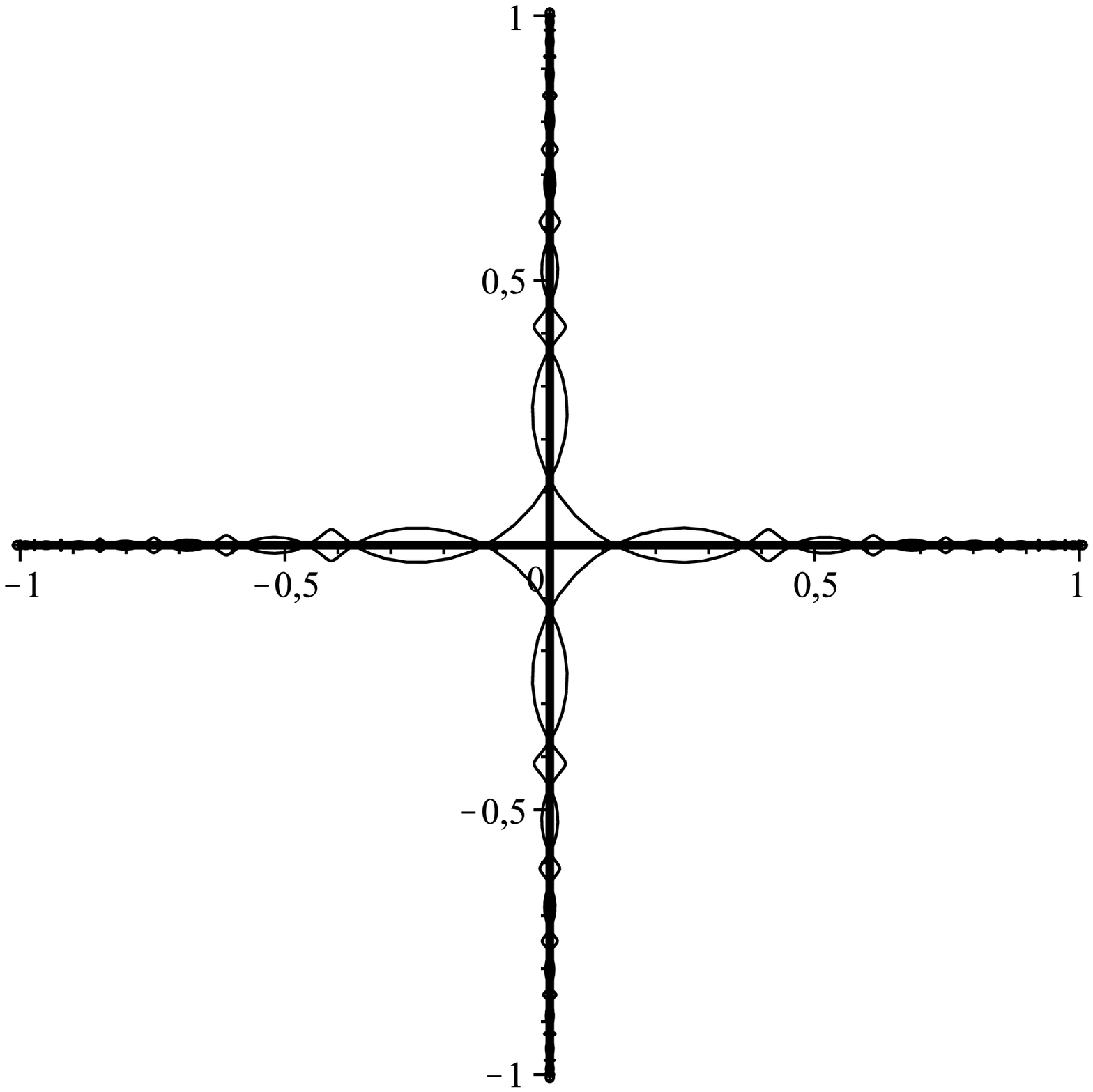}
\caption{$h_n (\TT )$ for $n=12$, $n=32$,
and $n=60$, respectively}
\end{center}
\end{figure}

\begin{figure}[H]
\begin{center}
\includegraphics[width=4.0cm,keepaspectratio=true]{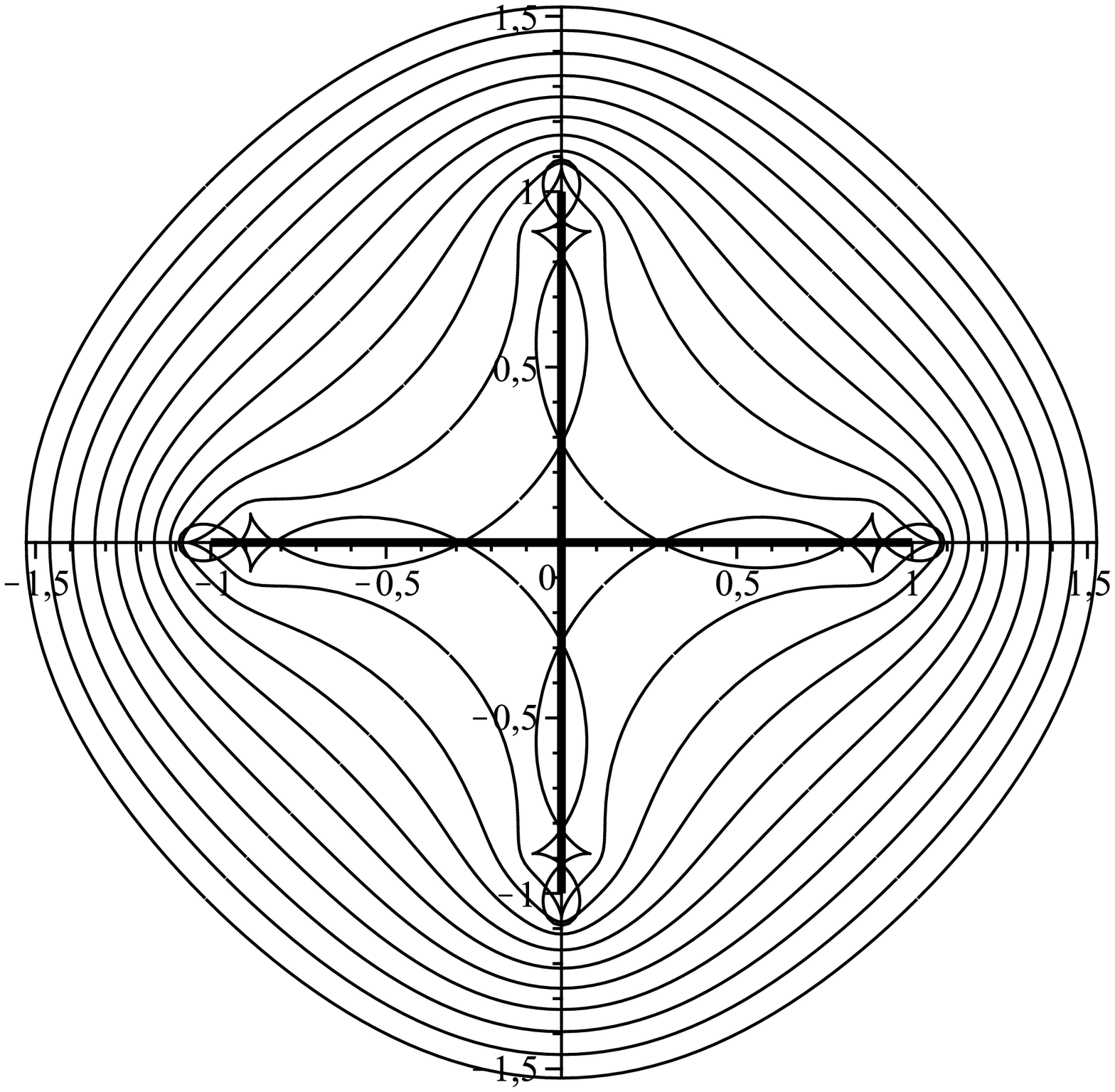}
\includegraphics[width=4.0cm,keepaspectratio=true]{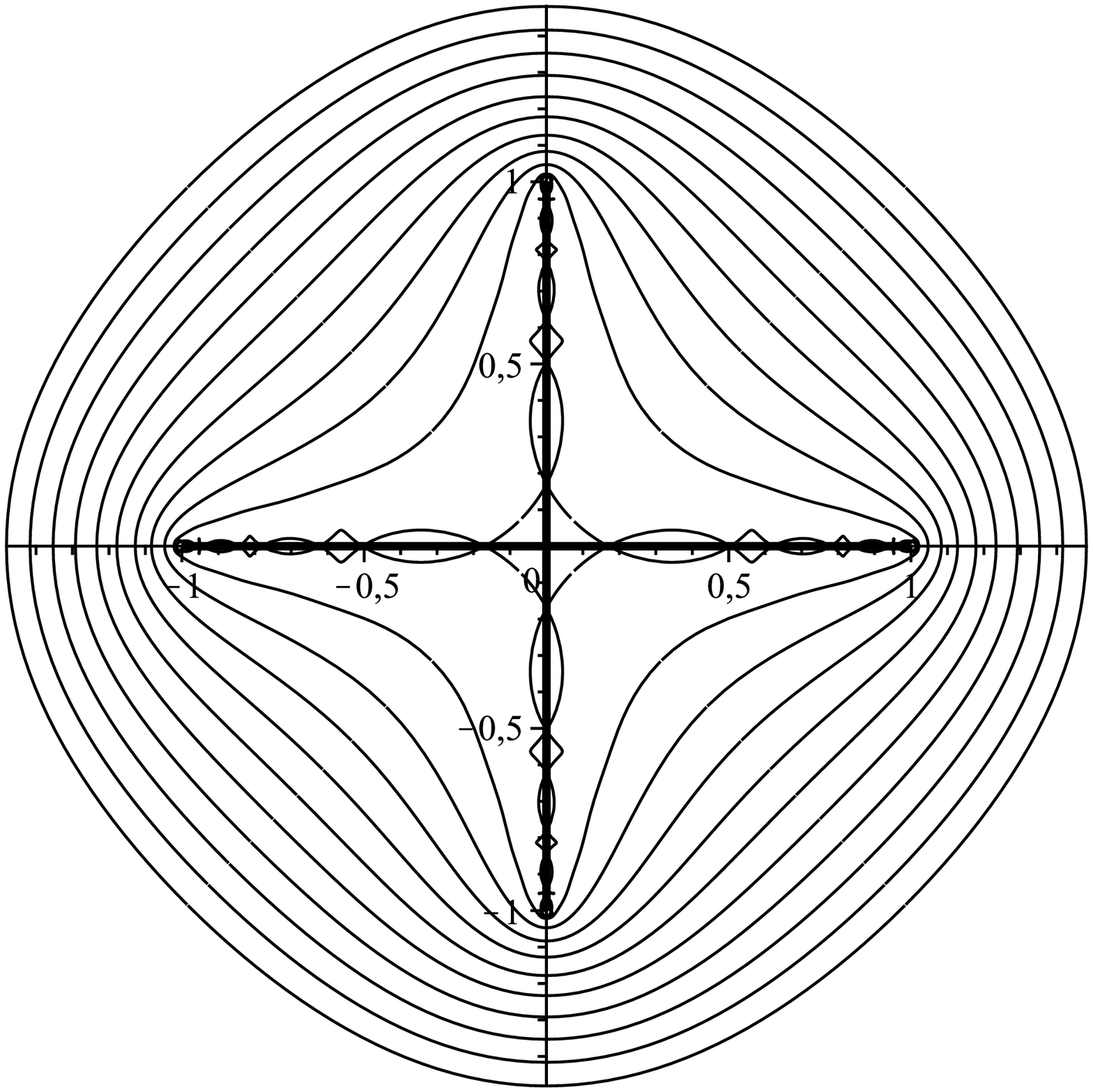}
\includegraphics[width=4.0cm,keepaspectratio=true]{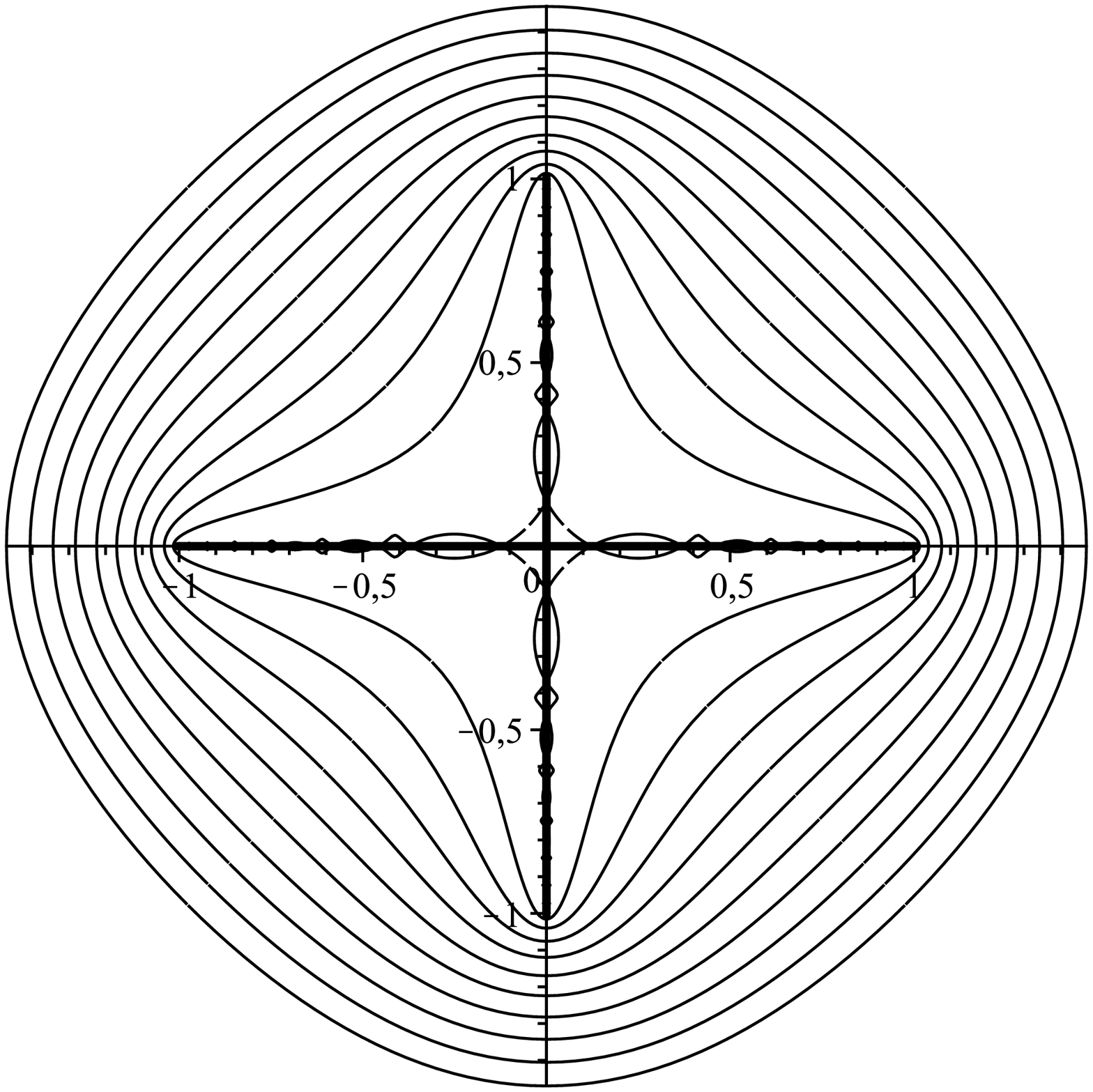}
\caption{Approximations of the Riemann mapping function using $h_n (re^{i\theta} )$, $n=12, 32, 60$, $r\in [1,2]$}
\end{center}
\end{figure}
As can be seen, the difference between $\phi (z) $ and $h_n (z) $ decreases as $|z|$ increases.
In the following figures we show some close-ups of the above figures.
\begin{figure}[H]
\begin{center}
\includegraphics[width=4.0cm,keepaspectratio=true]{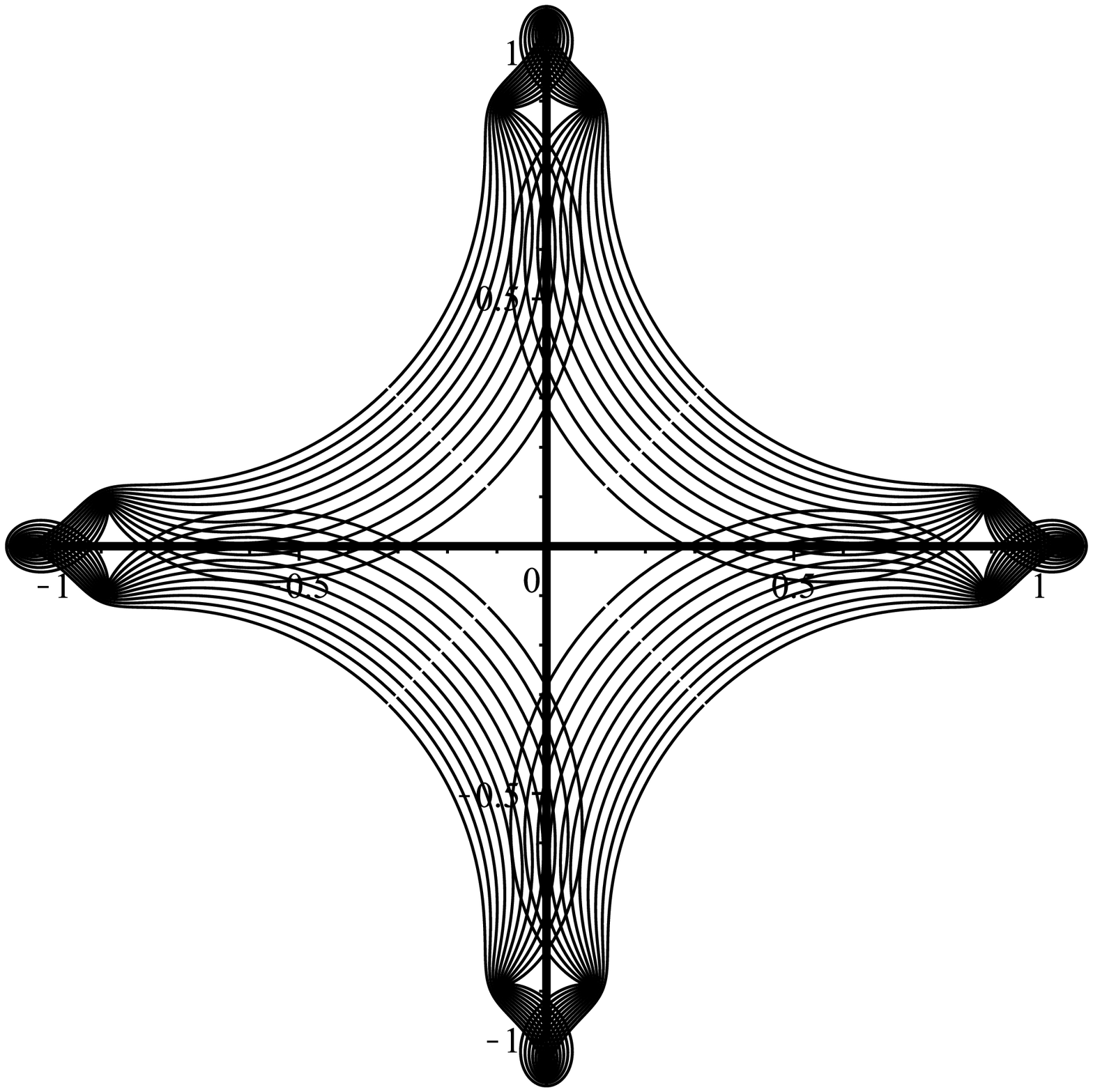}
\includegraphics[width=4.0cm,keepaspectratio=true]{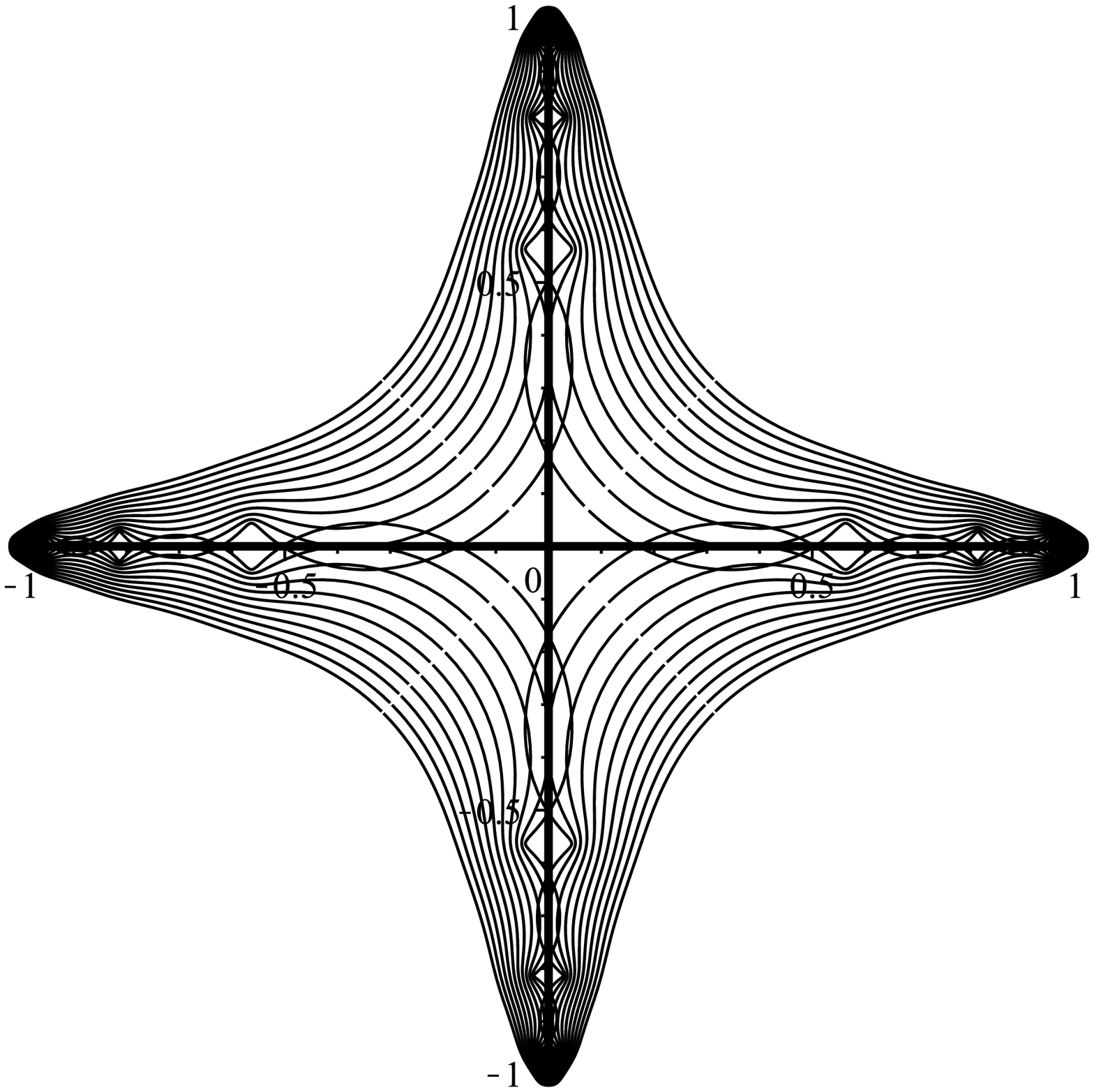}
\includegraphics[width=4.0cm,keepaspectratio=true]{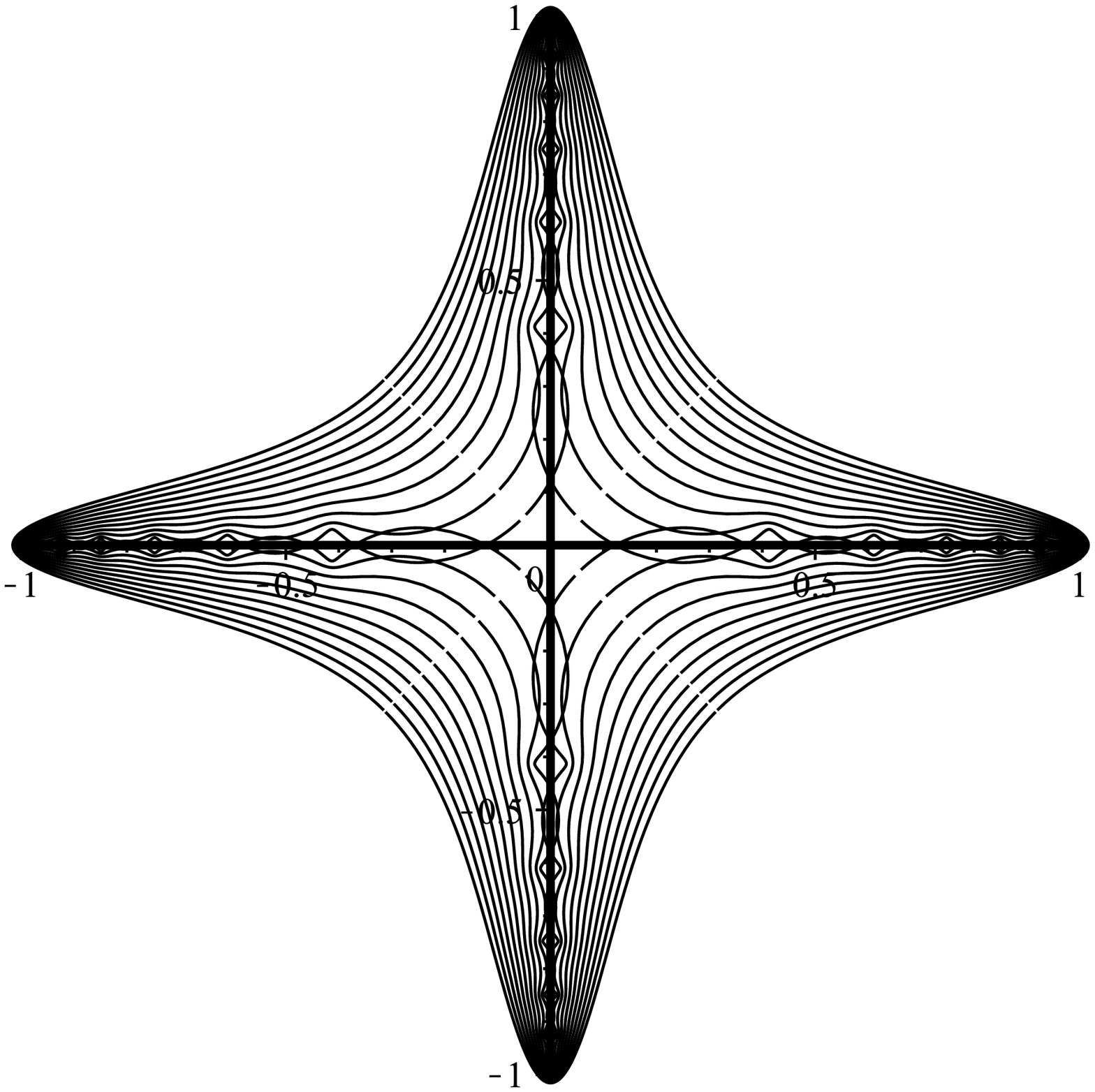}
\caption{Approximations of the Riemann mapping function using $h_n (re^{i\theta} )$, $n=12, 32, 60$, $r\in [1,1.1]$}
\end{center}
\end{figure}
\end{examp}

\begin{examp}
In the following example we take $\Gamma $ as the half part of a drop-like set of parametric equation
$$z(t)=\dfrac{(e^{i t})^2}{1+2 e^{i t}},t\in [0,\pi ],$$
and $\mu $ the uniform measure on $\Gamma $.

 Although in this case we do not know if the matrix $D-T$ defines a compact operator, the following figures seem to indicate that the convergence of $h_n(\TT)$ is very fast to $\Gamma$.  In the following figure we show several approximations of the support of $\mu $ using this method.
\begin{figure}[H]
\begin{center}
\includegraphics[width=4.75cm,keepaspectratio=true]{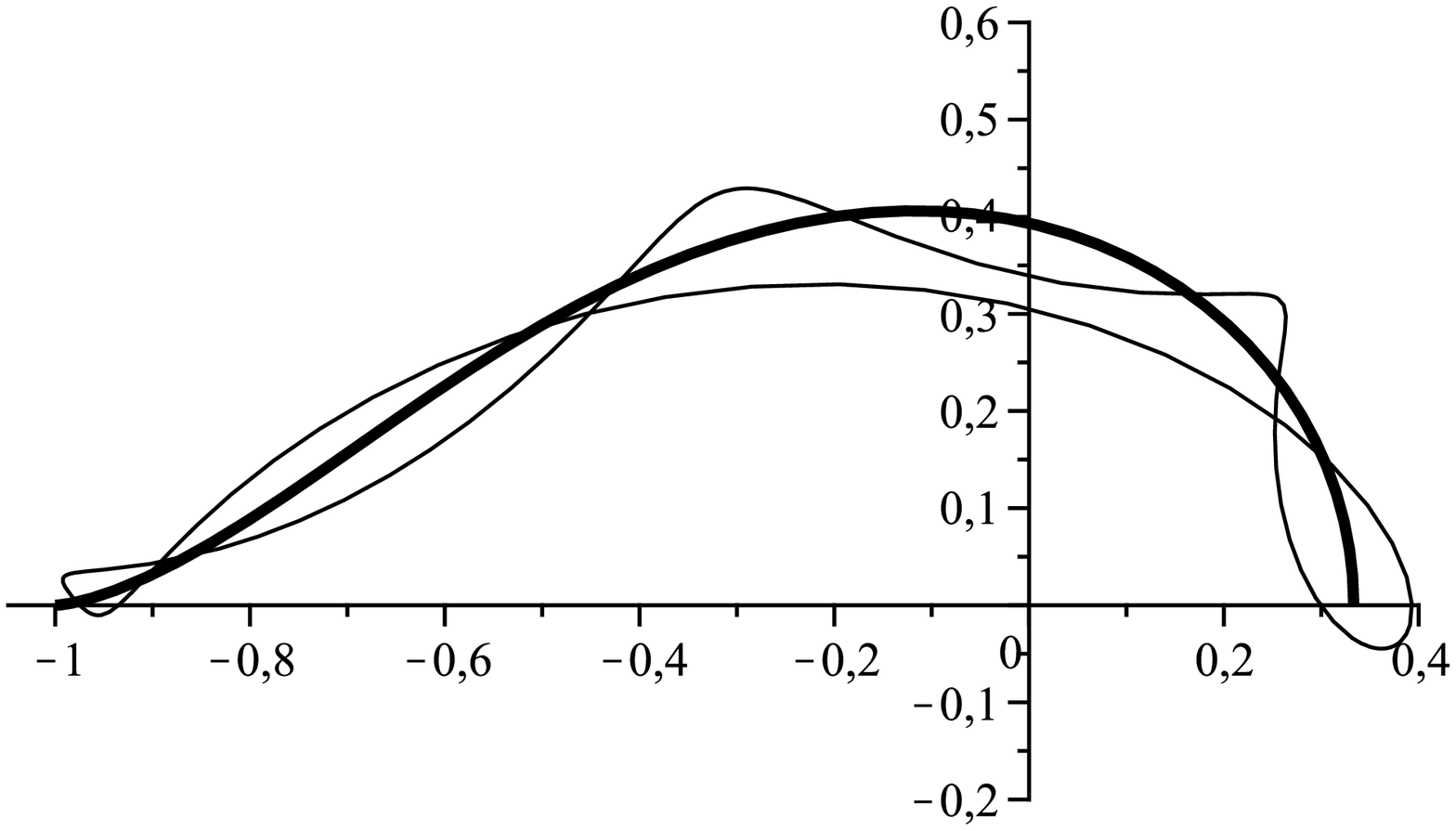}
\includegraphics[width=4.75cm,keepaspectratio=true]{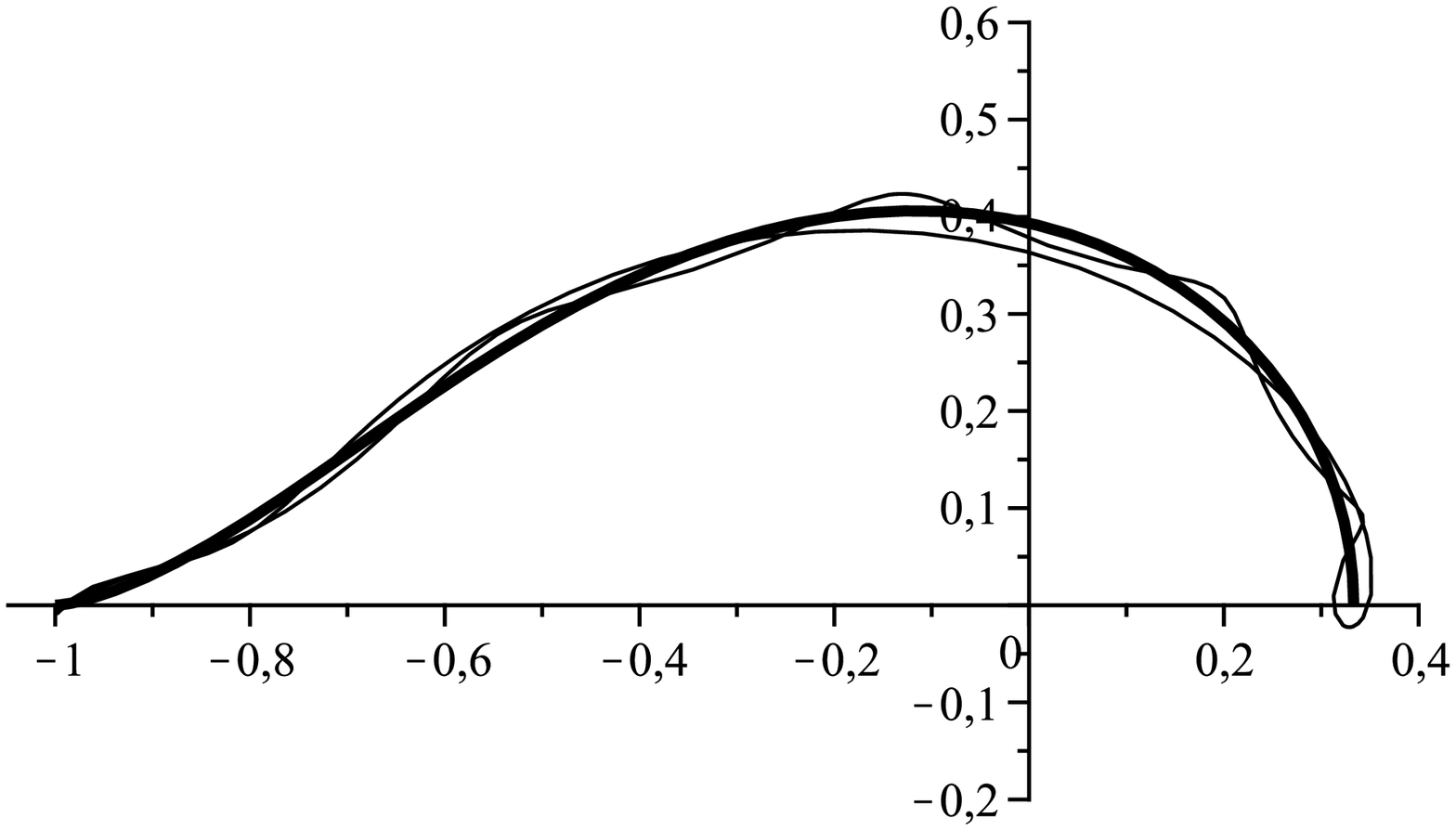}
\caption{$h_n (\TT )$ for $n=5$, $n=8$ and $n=11$, respectively}
\end{center}
\end{figure}
\end{examp}

\begin{examp}
For the last example we take $\Gamma $ as the spiral with parametric equation
$$z(t)=t \dfrac{e^{i t}}{6},t\in [0,2\pi ]$$
and we consider $\mu $ the uniform measure on $\Gamma $.

Although in this case we do not know if the matrix $D-T$ defines a compact operator, the following figures seem to indicate that the convergence of $h_n(\TT)$  to $\Gamma $ is worse than in the previous example. In the following figure we show several approximations of the support of $\mu $ using this method.
\begin{figure}[H]
\begin{center}
\includegraphics[width=4.75cm,keepaspectratio=true]{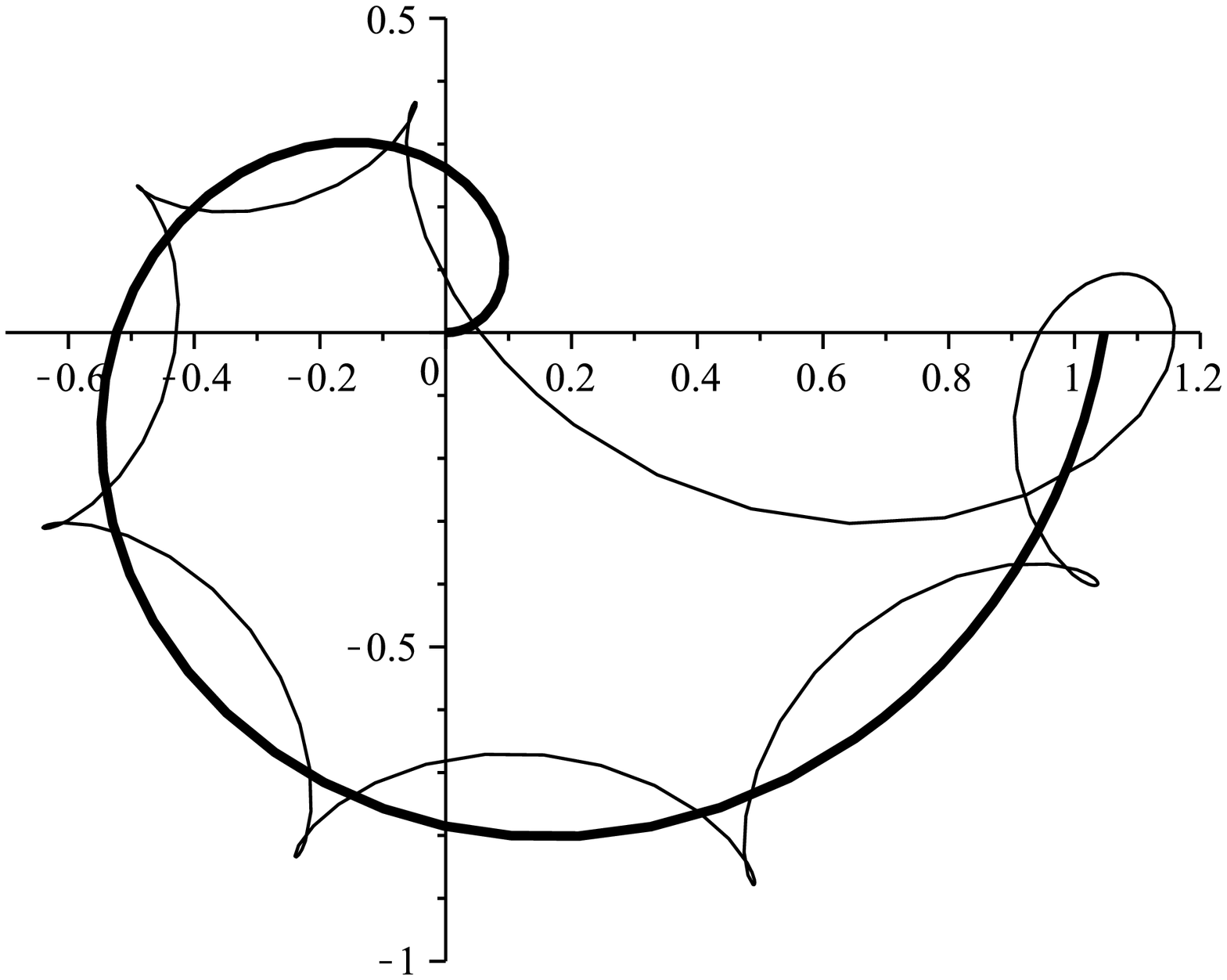}
\includegraphics[width=4.75cm,keepaspectratio=true]{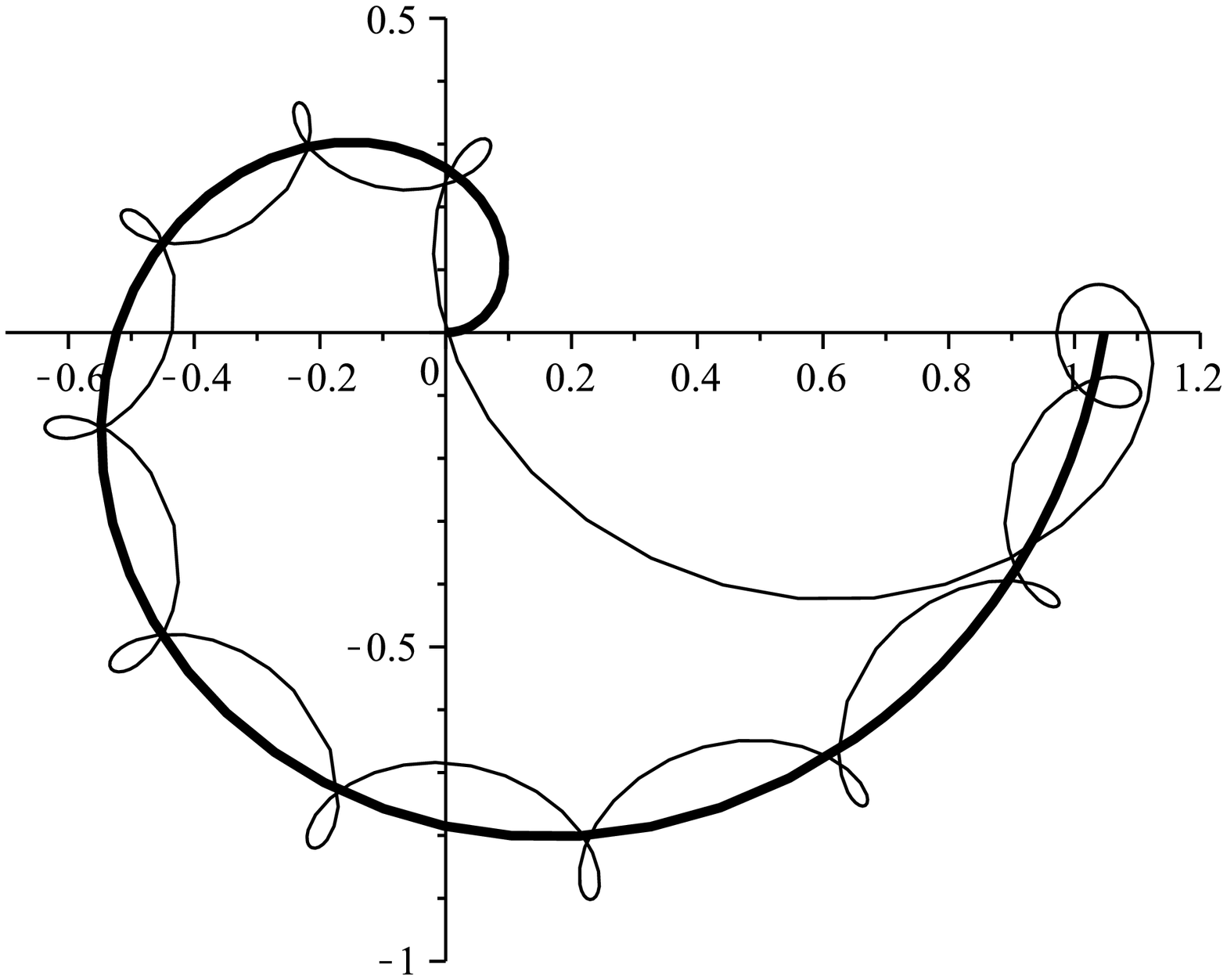}
\caption{$h_n (\TT )$ for $n=7$ and $n=11$, respectively}
\end{center}
\end{figure}
\end{examp}

\section*{Acknowledgements}

The authors have been supported by Comunidad Aut\'{o}\-no\-ma de Madrid and Universidad Polit\'{e}cnica de Madrid (UPM-CAM Q061010133).

\end{document}